\newtheorem{theorem}{Theorem}[section]
\newtheorem{lemma}[theorem]{Lemma}
\newtheorem{proposition}[theorem]{Proposition}
\newtheorem{corollary}[theorem]{Corollary}
\theoremstyle{definition}
\newtheorem{definition}[theorem]{Definition}
\newtheorem{claim}[theorem]{Claim}
\newtheorem{conjecture}[theorem]{Conjecture}
\newcommand{\mA}{\mathbb A}
\newcommand{\mC}{{\mathbb C}}
\newcommand{\mE}{{\mathbb E}}
\newcommand{\mF}{\mathbb F}
\newcommand{\mQ}{\mathbb Q}
\newcommand{\mR}{{\mathbb R}}
\newcommand{\mZ}{{\mathbb Z}}
\newcommand{\bk}{{\mathbf k}}
\newcommand{\bx}{{\mathbf x}}
\newcommand{\bA}{\mathbf A}
\newcommand{\rk}{\textnormal{rk}}
\newcommand{\prk}{\textnormal{prk}}
\newcommand{\del}{\triangle}
\author{Amichai Lampert and Tamar Ziegler}
\thanks{The authors were supported by ERC grant ErgComNum 682150, and ISF grant 2112/20.}
\title{On rank in algebraic closure}
\begin{document}

\begin{abstract}
	 Let $ \bk $ be a field and $Q\in \bk[x_1, \ldots, x_s]$ a form (homogeneous polynomial) of degree $d>1.$  The $\bk$-Schmidt rank $\rk_\bk(Q)$ of $Q$ is the minimal $r$ such that $Q= \sum_{i=1}^r R_iS_i$ with $R_i, S_i \in \bk[x_1, \ldots, x_s]$ forms of degree $<d$. When $ \bk $ is algebraically closed and $ \textnormal{char}(\bk)$ doesn't divide $d,$ this rank is closely related to $ \textnormal{codim}_{\mA^s} (\nabla Q(x) = 0)$ - also known as the Birch rank of $ Q. $ When $ \bk $ is a number field,  a finite field or a function field, we give polynomial bounds for $ \rk_\bk(Q) $ in terms of $ \rk_{\bar \bk} (Q) $ where $ \bar \bk $ is the algebraic closure of $ \bk. $ Prior to this work no such bound (even ineffective) was known for $d>4$. This result has immediate consequences for counting integer points (when $ \bk $ is a number field) or prime points (when $ \bk = \mQ $) of the variety $ (Q=0) $ assuming $ \rk_\bk (Q) $ is large. 
\end{abstract}

	\maketitle 

\section{Introduction}

In \cite{SCH85}, Schmidt made the following definition:
 
 \begin{definition}
 	 Let $ \bk $ be a field and $Q\in \bk[x_1, \ldots, x_s]$ a form, i.e. homogeneous polynomial. The Schmidt rank $\rk_\bk(Q)$  is the smallest natural number $r$ such that $Q= \sum_{i=1}^r R_iS_i$ with $R_i, S_i\in \bk[x_1, \ldots, x_s]$ forms of positive degree. If $ Q $ is any degree $ d $ polynomial, we define $ \rk_\bk(Q) $ as the rank of its degree $ d $ part. 
 \end{definition}

Note that for $l\in \bk[x_1, \ldots, x_s]$ linear and non-constant, $\rk_\bk (l) =\infty.$
Schmidt introduced this quantity when $ \bk = \mQ $ and $ Q $ is homogeneous and proved that if $ \rk_\mQ(Q) $ is sufficiently large then a certain local-global rule applies for counting the number of integer points in the set $ \{x\in \mZ^s, |x| \le P: Q(x) = 0\}. $ He also showed that $ \rk_\mC(Q) $ is essentially equivalent to $\textnormal{codim}_{\mC^s} (\nabla Q(x)=0), $ known as the Birch rank of $ Q $ (we will define Birch rank more generally in the next section). Note that it follows immediately from the definition that
$ \rk_\mC(Q) \le \rk_\mQ(Q). $ The object of this paper is to prove a polynomial bound in the opposite direction, for $ \bk=\mQ $ and several other fields. For perfect fields $\bk,$ inequalities bounding $ \rk_{\bk}(Q) $ in terms of $ \rk_{\bar{\bk}}(Q) $ are known for $ d=2,3 $ \cite{D20} and for $ d=4 $ there is an ineffective bound when $ \textnormal{char}(\bk)\neq 2 $ \cite{KP-quart-21}. The bound we prove is quite interesting in its own right, but also has a number of immediate applications to Diophantine equations, since many results which use the Hardy-Littlewood circle method assume the Birch rank is large. Our result allows us to relax this assumption to large Schmidt rank over the base field. 

We describe some direct applications of our result in the next section - counting prime solutions to Diophantine equations, counting integer solutions to polynomial equations over number fields, and an analogue for function fields. 

\begin{definition}
	We call the following fields \emph{admissible}:  number fields, finite fields and finite separable extensions of $ \mF_q(t).$
\end{definition}

We prove the following:
 
\begin{theorem}[Schmidt rank and field extensions] \label{main-single}
	 Let $\bk$ be an admissible field, $ \bar{\bk} $ its algebraic closure, and $Q\in \bk[x_1, \ldots, x_s]$ a polynomial of degree $ d>1 $ such that $ \textnormal{char}(\bk) > d  $ or $ \textnormal{char}(\bk) = 0.  $ Then there exist constants $ A = A(\bk,d), B = B(\bk,d) $ such that 
	\[ 
	\rk_\bk(Q) \le A [\rk_{\bar{\bk}}(Q)+1]^B.
	\]
\end{theorem}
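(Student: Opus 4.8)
\textbf{Plan of proof.} The plan is to route everything through the Birch rank $B(Q):=\textnormal{codim}_{\mA^{s}}(\nabla Q=0)$, which is a geometric quantity and hence unchanged under the extension $\bk\subseteq\bar\bk$. Differentiating a shortest decomposition $Q=\sum_{i=1}^{r}R_{i}S_{i}$ over $\bar\bk$, with $r=\rk_{\bar\bk}(Q)$, gives $\nabla Q=\sum_{i}(S_{i}\nabla R_{i}+R_{i}\nabla S_{i})$, which vanishes on the subvariety cut out by $R_{1},S_{1},\dots,R_{r},S_{r}$; that subvariety has codimension at most $2r$, so $B(Q)\le 2\,\rk_{\bar\bk}(Q)$. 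Since $\rk_{\bar\bk}(Q)\le\rk_{\bk}(Q)$ is automatic, the theorem follows once we show
\[
\rk_{\bk}(Q)\le A'(\bk,d)\,(B(Q)+1)^{B'(d)},
\]
i.e. that over an admissible field of characteristic $0$ or $>d$, small Birch rank already forces small Schmidt rank over the base field.

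I would prove this last inequality by induction on $d$; the case $d=2$ is immediate, since $\textnormal{char}(\bk)\neq 2$ lets us diagonalise over $\bk$, so that $Q=\sum_{i=1}^{B(Q)}a_{i}x_{i}^{2}=\sum_{i}(a_{i}x_{i})\,x_{i}$ has $\rk_{\bk}(Q)\le B(Q)$. For the inductive step, let $\deg Q=d$ and $B:=B(Q)$. The partials $\partial_{1}Q,\dots,\partial_{s}Q\in\bk[x_{1},\dots,x_{s}]$ are forms of degree $d-1$ generating an ideal of height $B$. First I would pass to a $\bk$-rational regularisation of this tuple: by prime avoidance and dimension theory — and, over a finite field, a Lang--Weil point count to compensate for the impossibility of a generic $\bk$-linear change of variables — one selects $\bk$-linear combinations $g_{1},\dots,g_{m}$ of the $\partial_{j}Q$, with $m=\textnormal{poly}(B)$, forming up to radical a regular sequence with $V(g_{1},\dots,g_{m})=V(\nabla Q)$, which after a further regularisation may be taken to be a high-rank system: every nonzero $\bk$-linear combination of the $g_{i}$ has Schmidt rank at least $\rho$, for a constant $\rho=\rho(d)$ we are free to fix large.

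The heart of the matter is then to convert this geometric information — a small-codimension singular locus, defined over $\bk$ — into the algebraic statement that $Q$ lies in the ideal generated over $\bk$ by $\textnormal{poly}(B)$ forms of degree $<d$; reading off that containment immediately yields a short Schmidt decomposition of $Q$ over $\bk$. Concretely, Euler's identity $dQ=\sum_{j}x_{j}\,\partial_{j}Q$ expresses $Q$ through the $\partial_{j}Q$; one rewrites the $\partial_{j}Q$ modulo $(g_{1},\dots,g_{m})$ via an effective Nullstellensatz (the $\partial_{j}Q$ vanish on $V(g_{1},\dots,g_{m})$, so a controlled power lies in the ideal), uses the geometry of high-rank complete intersections over admissible fields — geometric irreducibility together with the existence of $\bk$-rational points — to linearise this relation, and applies the inductive hypothesis to the $g_{i}$ (which have degree $d-1$) and to the lower-degree forms produced along the way.

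I expect this conversion step to be the main obstacle, and within it two points are genuinely delicate. Over a finite field one cannot invoke generic linear algebra and must replace it by Lang--Weil estimates throughout, which forces the auxiliary systems to be of high rank rather than merely generic. Over a number field, geometric irreducibility of the auxiliary varieties does \emph{not} by itself furnish rational points — already a smooth conic may have none — so one needs either a local--global principle tailored to high-rank complete intersections, or a reduction of the number-field and function-field cases to the finite-field case by reduction modulo a suitable prime while controlling how Schmidt rank behaves under that reduction.
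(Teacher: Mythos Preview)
Your reduction to showing $\rk_\bk(Q)\le A'(B(Q)+1)^{B'}$ is sound and equivalent to the paper's target, and you correctly anticipate that finite fields need bias/Lang--Weil input and that $\mQ$ can be handled by reduction mod~$p$. But the inductive scheme you propose for the key inequality has a genuine gap at exactly the point you flag. Euler's identity $dQ=\sum_j x_j\,\partial_j Q$ is already a Schmidt decomposition of length~$s$, not $\textnormal{poly}(B)$; the whole question is whether one can compress it. Replacing the full list of partials by $m=\textnormal{poly}(B)$ linear combinations $g_1,\dots,g_m$ with $V(g_1,\dots,g_m)=V(\nabla Q)$ does \emph{not} let you rewrite $Q$ as $\sum_i h_i g_i$ with $\deg h_i=1$. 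Indeed, were $(g_1,\dots,g_m)$ radical, each $\partial_j Q$ would lie in its degree-$(d-1)$ graded piece, namely $\textnormal{span}_\bk(g_1,\dots,g_m)$, forcing $\dim\textnormal{span}_\bk(\partial_1 Q,\dots,\partial_s Q)\le m$; but a form with $B(Q)$ small can perfectly well have all $s$ partials linearly independent (e.g.\ $Q=x_1(x_1^{d-1}+\cdots+x_s^{d-1})$ has $B(Q)\le 2$). So the ideal cannot be radical in the cases that matter, the effective Nullstellensatz only supplies powers $(\partial_j Q)^N\in(g_1,\dots,g_m)$, and nothing in your outline explains how to ``linearise'' that relation back to a short decomposition of $Q$ over~$\bk$. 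The induction on~$d$ never gets off the ground.

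The paper's route is structurally different and avoids this obstruction entirely. It first passes to the symmetric multilinear form $\tilde Q$, so that $\rk_\bk(Q)$ and the partition rank $\prk_\bk(\tilde Q)$ are comparable up to $\binom{d}{\lfloor d/2\rfloor}$, and then proves a \emph{universality} theorem: if $\prk_\bk(P)$ exceeds $C(t^d)^D$ then every multilinear form on $(\bk^t)^d$ arises as $P\circ(T_1\times\cdots\times T_d)$ for suitable $\bk$-linear maps~$T_i$. Specialising to a diagonal target $\sum_{k=1}^{\bar r}\prod_i \bx_i(k)$, whose partition rank over $\bar\bk$ is exactly~$\bar r$, forces $\prk_{\bar\bk}(P)\ge\bar r$. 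Universality is equivalent to solving a system of $t^d$ multilinear equations over~$\bk$ whose collective partition rank equals $\prk_\bk(P)$; this is where the field-specific work happens. Over finite fields one uses the bias--partition-rank inequality of Mili\'cevi\'c/Janzer and Cohen--Moshkovitz together with a Fourier count of solutions. Over $\mQ$ one reduces mod large~$p$, controls how $\prk$ behaves under reduction via a short-vector scaling lemma, and transfers back via a model-theoretic compactness argument. Over number fields and function fields one runs a Schmidt-style pigeonhole on small kernel vectors combined with the bound $\prk_\bk(P)\le 2^{d-1}g_\bk(P)$, where $g_\bk(P)$ is the codimension of the $\bk$-points of the multilinear singular locus. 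There is no induction on~$d$.
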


We will soon give explicit values for the constants $ A,B. $ Our result generalizes to collections of forms of the same degree, with an appropriate definition of rank.

\begin{definition}
	Let $ Q_1,\ldots,Q_n\in  \bk[x_1,\ldots,x_s] $ be homogeneous polynomials all of degree $ d>1. $ Their collective Schmidt rank is defined to be
	\[ 
	\rk_\bk(Q_1,\ldots,Q_n) : = \min_{0\neq a\in \bk^n} \rk_\bk \left( \sum_{i=1}^n a_i\cdot Q_i \right). 
	\]
	If $ Q_1,\ldots,Q_n $ are all of degree $ d $ but not necessarily homogeneous, we define $ \rk_\bk(Q_1,\ldots,Q_n)  $ in terms of their degree $ d $ parts.
\end{definition}

\begin{theorem}[Schmidt rank and field extensions]\label{main-multiple}
	Let $\bk$ be an admissible field, $ \bar{\bk} $ its algebraic closure, and $Q_1,\ldots,Q_n\in \bk[x_1, \ldots, x_s]$ a collection of polynomials of common degree $ d>1 $ such that $ \textnormal{char}(\bk) > d  $ or $ \textnormal{char}(\bk) = 0.  $ Then there exist constants $ A = A(\bk,d), B = B(\bk,d) $ such that if $ \bk $ is a perfect field we have 
	\[ 
	\rk_\bk(Q_1,\ldots,Q_n) \le A [n\cdot \rk_{\bar{\bk}}(Q_1,\ldots,Q_n)+1]^B,
	\]
	and if $ \bk $ is an imperfect field we have
	\[ 
	\rk_\bk(Q_1,\ldots,Q_n) \le A \left[ n^{1+1/d}\cdot (\rk_{\bar{\bk}}(Q_1,\ldots,Q_n)+1)\right] ^B.
	 \]
	 We can take $ A(\bk,d) , B(\bk,d) $ as follows:
		\begin{enumerate}
		\item $  A = 4^{d-1}d\binom{d}{\lfloor d/2 \rfloor}^d , \ B = d $ when $\bk = \mQ. $
		\item   $A  = 2^{2^{O(d^2)}},\   
    B =  2^{2^{O(d^2)}}$  when $ \bk $ is a finite field.
		\item $A = 2^{d-1}\binom{d}{\lfloor d/2 \rfloor}^d,\   
    B = d$ when $\bk$ is a finite field and $|\bk| \ge F(\rk_\bk (Q_1,\ldots,Q_n),n,d).$  
		\item  $ A = 2^{d-1}(d-1) \binom{d}{\lfloor d/2 \rfloor}^{2d}, \  
    B = 2d$ when $ \bk $ is a number field or a finite separable extension of $ \mF_q(t). $
	\end{enumerate}
\end{theorem}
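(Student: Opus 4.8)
I would first reduce Theorem~\ref{main-multiple} to Theorem~\ref{main-single}. The set of $a\in\mP^{n-1}$ with $\rk_{\bar\bk}(\sum_i a_iQ_i)\le\rho:=\rk_{\bar\bk}(Q_1,\dots,Q_n)$ is a nonempty $\bk$-constructible set, hence contains a closed point over a finite extension $L/\bk$; applying $\textnormal{Tr}_{L/\bk}$ to a scalar multiple of the corresponding combination (and, in the imperfect case, a $p$-th power correction, which is what produces the extra $n^{1/d}$) yields a nonzero \emph{$\bk$}-combination of the $Q_i$ whose $\bk$-rank controls $\rk_\bk(Q_1,\dots,Q_n)$ up to the stated factor in $n$. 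So it suffices to treat a single $Q\in\bk[x_1,\dots,x_s]$ with $r:=\rk_{\bar\bk}(Q)$. Any $\bar\bk$-decomposition $Q=\sum_{i=1}^rR_iS_i$ forces $\{\nabla Q=0\}$ to contain $\{R_1=\dots=R_r=S_1=\dots=S_r=0\}$, so the Birch rank $b:=\textnormal{codim}\{\nabla Q=0\}$ satisfies $b\le 2r$; and $b$ is a geometric invariant, unchanged on passing to $\bar\bk$ or (away from finitely many primes) to a residue field. Since $\textnormal{char}(\bk)>d$ or $0$, Euler's identity $dQ=\sum_i x_i\partial_iQ$ shows $Q$ vanishes on $\{\nabla Q=0\}$. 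It now suffices to bound $\rk_\bk(Q)$ by a polynomial in $b$, of degree depending only on $d$.

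\textbf{A $\bk$-rational regularization.} Over an algebraically closed field this bound is the known comparison between Schmidt rank and Birch rank (in $\textnormal{char}>d$); the point is to carry it out $\bk$-rationally. I would run an induction that at each step uses a point of a \emph{bounded-degree} auxiliary variety attached to $Q$ — a singular point of $Q$, or of a suitable linear section, whose existence the smallness of $b$ guarantees — to make a $\bk$-linear change of coordinates after which certain monomials of $Q$ in the new first variable are forced to vanish. This splits off a bounded number of products of lower-degree forms over $\bk$ and strictly decreases a complexity measure controlled by $b$ and $d$; after $O_d(b)$ steps one reaches a form in few variables (or of small rank) and finishes with the trivial estimate writing a degree-$d$ form as a sum of products of forms of degrees $\lfloor d/2\rfloor$ and $\lceil d/2\rceil$, which is the source of the $\binom{d}{\lfloor d/2\rfloor}$'s. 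The argument is formal once the required rational points exist.

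\textbf{Supplying the points: the four cases.} If $\bk$ is finite with $|\bk|$ large relative to the (bounded) degrees that appear — case (3) — an effective Lang--Weil estimate furnishes all the needed rational points, with the clean constants; if $\bk=\mQ$ the residue fields are prime, which removes any descent-through-subfields issue and gives the slightly sharper case (1). For a number field or a finite separable extension of $\mF_q(t)$ — case (4) — I would choose a prime $\fp$ of good reduction whose residue field is large enough (allowed, since the bound $F$ in case (3) may depend on the answer), so that $b$ is preserved and case (3) gives a decomposition of $Q\bmod\fp$ with boundedly many terms; one then lifts this, after enlarging the system of low-degree generators to a regular sequence — which lifts freely — and using flatness of a complete intersection to deduce that $Q$ lies in the lifted ideal over $\bk$. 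This lifting accounts for the extra factor $d$ in the exponent. When $\bk$ is finite but small — case (2) — there is no Lang--Weil saving; one then runs the regularization in a field-independent way, in the spirit of the Ananyan--Hochster strength-versus-collapse theorem (or passes to an extension $\mF_{q^k}$ of controlled degree and descends, paying a quadratic price per step), which forces the bounds down to the doubly-exponential $2^{2^{O(d^2)}}$.

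\textbf{Main obstacle.} I expect the hardest step to be the lifting in case (4): one must keep both the degree of the field of definition of the lifted data and the number of terms polynomial in $b$ (equivalently in $\rk_{\bar\bk}(Q)$), rather than acquiring a fresh bounded extension at each of the $\sim b$ inductive steps, which would destroy polynomiality. Keeping this — and every constant — polynomial is exactly what forces the hypothesis $\textnormal{char}(\bk)>d$ and the use of effective versions of Lang--Weil and of the Nullstellensatz throughout.
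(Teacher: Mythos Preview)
Your route is quite different from the paper's. The paper does \emph{not} attempt to find or lift a short decomposition of $Q$. Instead it passes to the associated symmetric multilinear form $\tilde Q$ and proves a \emph{universality} statement: if $\prk_\bk(P_1,\dots,P_n)$ is large then the system of $nt^d$ multilinear equations $P_l\circ(A_1\times\cdots\times A_d)=R_l$ has a $\bk$-solution, for \emph{any} targets $R_l$. Over finite fields this is shown by the bias--rank inequalities of Mili\'cevi\'c/Janzer and Cohen--Moshkovitz plus a routine character sum; over $\mQ$ by reducing mod $p$, a scaling/pigeonhole lemma, and a model-theoretic transfer from $\bar\mF_p$ to $\bar\mQ$; over number fields and function fields by a Schmidt-style point-counting bound on the Birch singular locus of a \emph{single} nontrivial combination (Proposition~\ref{rank-sing-families}). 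The $n\!\to\!1$ reduction in the perfect case is the Galois-descent argument you sketch (the paper attributes it to Polishchuk), so that part of your outline is fine.

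The genuine gap in your plan is the lifting step in case (4). Suppose you have $Q\bmod\fp=\sum_{i=1}^r R_iS_i$ over the residue field and you lift $R_i,S_i$ arbitrarily to forms $\tilde R_i,\tilde S_i$ over $\mcO_\fp$. Then $Q-\sum_i\tilde R_i\tilde S_i\in\fp\cdot\mcO_\fp[x]$, but this residual form has no a priori rank bound over $\bk$; there is nothing to iterate on. Your proposed fix, ``enlarge to a regular sequence and use flatness of a complete intersection,'' does not do what you need: flatness of $\mcO_\fp[x]/(\tilde R_1,\dots)$ over $\mcO_\fp$ controls fiber dimensions, but it does \emph{not} say that an element of the ideal on the special fiber lies in the ideal on the generic fiber. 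Concretely, membership of $Q$ in $(\tilde R_i,\tilde S_i)$ modulo $\fp$ is just the congruence above and carries no information about $Q$ over $\bk$. This is exactly why the paper abandons decomposition-lifting and instead proves solvability of the universality equations directly over $\bk$ via point counting.

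A secondary issue: your ``$\bk$-rational regularization'' is asserted rather than supplied. Schmidt's induction requires a rational point on the singular locus (or a section thereof) at each step; over $\mQ$ or a number field there is no reason such a point exists, and Lang--Weil only helps after reduction, which brings you back to the lifting problem. The paper sidesteps this entirely by never trying to produce the $R_i,S_i$ explicitly.
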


\pagebreak

We mention a conjecture stated in \cite{AKZ21}

\begin{conjecture}
	Let $ d>1 $ and let $\bk$ be a field with $ char(\bk) > d $ or $ char(\bk) = 0 $
	There exists a constant $ C(d) $ such that for any polynomial $ Q\in \bk[x_1,\ldots,x_s] $ of degree $ d $ we have 
	\[ 
	\rk_\bk(Q) \le C\cdot \rk_{\bar \bk} (Q).
	\] 
\end{conjecture}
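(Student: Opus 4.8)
We outline a strategy towards the conjecture, keeping $\mathrm{char}(\bk)=0$ or $\mathrm{char}(\bk)>d$ throughout.

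\textbf{Step 1: pass to Birch rank.} Set $\mathrm{brk}(Q)=\textnormal{codim}_{\mA^s}(\nabla Q=0)$. Since $Q\in\bk[x_1,\ldots,x_s]$, the variety $\{\nabla Q=0\}$ is cut out by $\bk$-polynomials, so $\mathrm{brk}(Q)$ is unchanged under $\bk\subseteq\bar\bk$; and from any $\bar\bk$-decomposition $Q=\sum_{i=1}^{r}A_iB_i$ the identity $\nabla Q=\sum_i(B_i\nabla A_i+A_i\nabla B_i)$ shows $\nabla Q$ vanishes on $\{A_1=B_1=\cdots=A_r=B_r=0\}$, whence $\mathrm{brk}(Q)\le2\rk_{\bar\bk}(Q)$. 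Hence it suffices to prove the \emph{strong regularity} bound $\rk_\bk(Q)\le C(d)\,\mathrm{brk}(Q)$ over every field of characteristic $0$ or $>d$; over $\bar\bk$ the bound $\rk_{\bar\bk}(Q)\le c(d)\,\mathrm{brk}(Q)$ is already known in this range of characteristics (the close relation between the two ranks noted in the introduction), so what remains is a rationality question.

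\textbf{Step 2: reduce to a peeling lemma.} As $\rk_\bk$ is subadditive, it is enough to produce, \emph{over $\bk$}, a decomposition $Q=\sum_{j=1}^{N}Q_j$ with $N=O_d(\rk_{\bar\bk}(Q))$ and $\rk_{\bar\bk}(Q_j)\le1$ for every $j$: each summand then satisfies $\rk_\bk(Q_j)\le A\cdot2^{B}=:M(d)$ by Theorem~\ref{main-single} --- with $M(d)$ a function of $d$ alone, by the explicit constants of Theorem~\ref{main-multiple}, at least for admissible $\bk$ --- so $\rk_\bk(Q)\le NM(d)=O_d(\rk_{\bar\bk}(Q))$. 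Such a decomposition follows by repeatedly applying a \emph{peeling lemma}: \emph{if $Q\in\bk[x_1,\ldots,x_s]$ has degree $d$ and $\rk_{\bar\bk}(Q)\ge2$, then one can write $Q=Q'+Q''$ with $Q',Q''\in\bk[x_1,\ldots,x_s]$, $\rk_{\bar\bk}(Q')\le1$ and $\rk_{\bar\bk}(Q'')\le\rk_{\bar\bk}(Q)-1$}. Iterating drops the $\bar\bk$-rank by one at each stage, so after at most $\rk_{\bar\bk}(Q)$ steps the remaining term has $\bar\bk$-rank $\le1$ and $N\le\rk_{\bar\bk}(Q)$. Over $\bar\bk$ the peeling is trivial: split off a single product of an optimal decomposition. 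The entire content is to do it over $\bk$; allowing $\rk_{\bar\bk}(Q')\le R_0(d)$ for a constant $R_0(d)$ only makes this easier, since the rank still drops by one per step.

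\textbf{Step 3: execute the peeling by descent.} Let $\mathcal P_Q$ parametrize the pairs $(Q',Q'')$ of degree-$d$ forms with $Q'+Q''=Q$, $\rk_{\bar\bk}(Q')\le1$ and $\rk_{\bar\bk}(Q'')\le\rk_{\bar\bk}(Q)-1$; these are Galois-stable, constructible conditions on the coefficients, all defined over $\bk$ because $Q$ is, and $\mathcal P_Q(\bar\bk)\neq\emptyset$ by Step~2. We want a point of $\mathcal P_Q(\bk)$, and the plan is to obtain it by descent. The natural symmetries --- the $\mathrm{GL}_2$ acting on the two factors of $Q'$ when they have equal degree, together with the gauge moves $(Q',Q'')\mapsto(Q'+P,Q''-P)$ for $P$ of bounded $\bar\bk$-rank --- act on a suitable slice of $\mathcal P_Q$ through a linear $\bk$-group $G$; the goal is to realize that slice as a torsor under a group on which descent is free. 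The optimistic scenario is that $G$ can be taken to be a \emph{vector group}, so that $H^1(\bk,G)=0$ for \emph{every} field (additive Hilbert~90) and the conjecture follows over all fields of characteristic $0$ or $>d$ at once. Failing that, if $G$ is merely connected then $H^1(\bk,G)=0$ whenever $\mathrm{cd}(\bk)\le1$ (Steinberg), which covers finite fields and finite separable extensions of $\mF_q(t)$ --- with a Lang--Weil count supplying the point over small finite fields, exactly the issue that forces the hypothesis $|\bk|\ge F(\ldots)$ in Theorem~\ref{main-multiple}(3) --- while over a number field one needs in addition a Hasse principle for the $G$-torsor, most cleanly by exhibiting $G$ as ``special'' in Serre's sense (a product of $\mathrm{SL}_n$'s and vector groups). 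For imperfect $\bk$, note that $\mathrm{char}(\bk)=p>d$ forces every $\bar\bk$-decomposition to involve only forms of degree $<d<p$; a Frobenius-twist argument then places it over $\bk^{\mathrm{sep}}$, reducing to the Galois-descent case above.

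\textbf{Main obstacle.} The difficulty is concentrated in Steps~2--3. The peeling lemma over $\bk$ is not a formal consequence of its $\bar\bk$-analogue, and $\mathcal P_Q$ can fail to carry a $\bk$-point for reasons unrelated to any group action, so one must genuinely exhibit enough $\bk$-rational structure inside the space of Schmidt decompositions. Even over $\bar\bk$, forcing a rank drop of exactly one at a cost bounded \emph{independently of the starting rank} is essentially the known $O_d(1)$-loss regularity theorem, which is already intricate; making it $G$-equivariant demands control of the symmetry group of Schmidt decompositions, which is neither reductive nor visibly special, and then annihilation of a residual finite obstruction by means of the arithmetic of $\bk$. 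A secondary obstacle is uniformity over small finite fields, where generic-point arguments collapse and one needs point counts with constants depending only on $d$ --- precisely the hypothesis that Theorem~\ref{main-multiple}(3) is forced to carry and that the conjecture demands one remove.
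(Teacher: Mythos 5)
This statement is not proved in the paper: it is quoted from \cite{AKZ21} as an open conjecture, known only for $d=2,3$ over perfect fields, and the paper's own results give only a \emph{polynomial} bound $\rk_\bk(Q)\le A[\rk_{\bar\bk}(Q)+1]^B$, and only for admissible fields. So there is no proof of the paper's to compare yours against, and your text is explicitly a strategy sketch rather than a proof. The decisive gaps are in Steps 2 and 3, and they are not technical loose ends but the entire content of the conjecture. The peeling lemma --- producing, \emph{over} $\bk$, a summand $Q'$ with $\rk_{\bar\bk}(Q')\le 1$ (or $\le R_0(d)$) such that $\rk_{\bar\bk}(Q-Q')$ drops by one --- is nowhere argued; as you note, over $\bar\bk$ it is trivial and the whole issue is rationality, which you defer to Step 3. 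Step 3 in turn rests on a chain of unestablished hypotheses: that a slice of $\mathcal P_Q$ is a torsor under a linear $\bk$-group $G$ at all, that $G$ is a vector group (or connected, or special), and that no non-group-theoretic obstruction to a $\bk$-point intervenes --- you concede each of these yourself. The symmetry group of Schmidt decompositions is not known to have any of these properties, and nothing in the proposal constrains it. An argument whose key lemma is ``the conjecture holds one rank level at a time, rationally'' has not reduced the problem.

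Step 1 also conceals a real difficulty. The reduction to $\rk_\bk(Q)\le C(d)\,\mathrm{brk}(Q)$ is a correct reformulation (both inequalities you cite there are fine over $\bar\bk$), but it does not make the problem more tractable: the known singular-locus bound over a general field (Theorem~\ref{rank-sing}, from \cite{KP21}) controls $\prk_\bk$ by the codimension of the Zariski closure of the \emph{$\bk$-points} of the singular locus, which over a non-closed field can be far larger than $\mathrm{brk}(Q)$ precisely because the singular locus may have few rational points. That gap between $g_\bk$ and $g_{\bar\bk}$ is the conjecture in another guise. Finally, your invocation of Theorem~\ref{main-single} to bound $\rk_\bk(Q_j)$ for the bounded-rank pieces only applies to admissible $\bk$, whereas the conjecture is asserted for every field of characteristic $0$ or $>d$; for a general such field even the bounded-rank case ($\rk_{\bar\bk}=1$ implies $\rk_\bk\le M(d)$) is not supplied by anything in the paper.
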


This conjecture is known for $d=2,3$ when $\bk$ is a perfect field  \cite{D20}.
\newline

The paper is organized as follows: In section 2 we give some number theoretic applications of our results. In section 3 we explain how to deduce them from an analogous theorem about multilinear polynomials. In section 4 we prove theorem \ref{main-multiple} for finite fields and for $ \mQ $ (the result for $ \mQ $ relies on the finite fields result), and in section 5 we prove it for number fields and for finite separable extensions of $\mF_q(t).$

\subsection{Acknowledgements}
The authors thank the anonymous referee for a careful reading and helpful suggestions.

\section{Applications}

In \cite{B62}, Birch defined a geometric notion of rank which is closely related to the singular locus of the variety defined by a collection of polynomials. In many applications of the Hardy-Littlewood circle method, it's necessary to assume that the Birch rank of the polynomials we are interested in is large. 

\begin{definition}
	Let $ Q\in \bk[x_1,\ldots,x_s] $ be a form of degree $ d. $ The Birch rank of $ Q $ is $ \rk_B(Q) = \textnormal{codim}_{\mathbb{A}^s} (\nabla Q(x) = 0). $ Given a collection $ Q_1,\ldots,Q_n\in \bk[x_1,\ldots,x_s] $ of forms of degree $ d, $ we define $ S(Q_1,\ldots,Q_n) $ to be the variety where the matrix of partial derivatives $ (\partial Q_i/\partial x_j) $ has rank $ < n. $ The Birch rank of the collection is $ \rk_B(Q_1,\ldots,Q_n) = \textnormal{codim}_{\mathbb{A}^s} S(Q_1,\ldots,Q_n). $ For a general collection of polynomials $ Q_1,\ldots,Q_n\in \bk[x_1,\ldots,x_s] $ of degree $ d, $ we define their Birch rank in terms of the homogeneous degree $ d $ parts. As a matter of convention, set $ \rk_B(\emptyset) = s. $
\end{definition}

It's not difficult to show that there is an inequality
\[ 
\rk_B(Q_1,\ldots,Q_n) \le 2\rk_\bk(Q_1,\ldots,Q_n). 
\]

Let us discuss for a moment what happens with quadratics. For a single quadratic $ Q $, there is an elementary inequality in the opposite direction
$
\rk_\bk(Q) \le \rk_B(Q).
$
If $ \bk $ is algebraically closed, we can take the union of the Birch singular loci of non-trivial linear combinations $ c_1Q_1+\ldots+c_nQ_n, $ which correspond to points of $ \mathbb P^{n-1}, $ and apply the above inequality to deduce:
\[ 
\rk_\bk(Q_1,\ldots,Q_n) \le \rk_B(Q_1,\ldots,Q_n)+n-1. 
\]
As a consequence, when $ \rk_\bk(Q_1,\ldots,Q_n)\ge 2n-1 $ we deduce that $ X = \{Q_1=\ldots=Q_n=0\} $ is a complete intersection of codimension $ n. $ For suppose $ Y\subset X $ has codimension $ <n. $ Since $ \rk_B(Q_1,\ldots,Q_n) \ge n, $ there exists a point $ y\in Y $ such that the Jacobian matrix has rank $ n $ at $ y. $ But then the tangent space at $ y $ has codimension $ n, $ contradicting the fact that the local codimension at $ y $ is $ <n. $

Kazhdan, Polishchuk and the first author \cite{KP21} extended results of Schmidt to show that a similar inequality holds also for higher degrees. Cohen and Moshkovitz \cite{COMO21} proved a similar result for a single multi-linear polynomial.

\begin{theorem}[Theorem 1.4 in \cite{KP21}]\label{rk-sing}
	Assume that $ d\ge 2, $ $ \bk $ is algebraically closed, and $ \textnormal{char}(\bk) $ doesn't divide $ d. $ Then we have
	\[ 
	\rk_\bk(Q_1,\ldots,Q_n) \le (d-1)\left[ \rk_B(Q_1,\ldots,Q_n)+n-1\right] .
	\]
	In particular, if $ \rk_\bk(Q_1,\ldots,Q_n) \ge (d-1)(2n-1) $ then
	$ (Q_1=\ldots=Q_n=0) $
	 is a complete intersection of codimension $ n.$
\end{theorem}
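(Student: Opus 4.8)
The statement to prove is Theorem~\ref{rk-sing}: over an algebraically closed field $\bk$ with $\textnormal{char}(\bk)\nmid d$, one has $\rk_\bk(Q_1,\ldots,Q_n) \le (d-1)[\rk_B(Q_1,\ldots,Q_n)+n-1]$, with the complete-intersection corollary. The plan is to reduce the collective inequality to the single-form case and then establish the single-form bound $\rk_\bk(Q)\le (d-1)\rk_B(Q)$ by a direct geometric/algebraic argument. For the reduction: by definition $\rk_\bk(Q_1,\ldots,Q_n)=\min_{0\ne a}\rk_\bk(\sum a_iQ_i)$, so fix $a$ achieving this minimum and set $Q=\sum a_iQ_i$. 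The point is that a bound on $\rk_B$ of the collection controls $\rk_B(Q)$ up to the additive $n-1$: the Birch singular locus $S(Q_1,\ldots,Q_n)$ (where the Jacobian has rank $<n$) contains, for each point $[c]\in\mathbb P^{n-1}$, the locus $\{\nabla(\sum c_iQ_i)=0\}$ away from a lower-dimensional set, and a dimension count over the $(n-1)$-dimensional family of linear combinations gives $\rk_B(Q)\le \rk_B(Q_1,\ldots,Q_n)+n-1$ for a generic — hence for some, and by semicontinuity the relevant — choice. So it suffices to prove the single-form statement and apply it to this $Q$.

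**The single-form bound.**

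For a single form $Q$ of degree $d$ with $\textnormal{char}(\bk)\nmid d$, the goal is $\rk_\bk(Q)\le (d-1)\,\rk_B(Q)$, where $\rk_B(Q)=\textnormal{codim}(\nabla Q=0)$. Let $h=\textnormal{codim}(\nabla Q=0)$; then the variety $\nabla Q=0$ has dimension $s-h$, so there exist $h$ of the partials, say $\partial Q/\partial x_{i_1},\ldots,\partial Q/\partial x_{i_h}$, whose common zero locus already has codimension $h$ — i.e. these $h$ forms cut out the singular locus set-theoretically. Each $\partial Q/\partial x_{i_j}$ has degree $d-1$. The key identity is the Euler relation: since $\textnormal{char}(\bk)\nmid d$, $d\cdot Q=\sum_{j=1}^s x_j\,\partial Q/\partial x_j$. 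This writes $Q$ as a sum of $s$ products of a linear form with a degree-$(d-1)$ form, which only gives $\rk_\bk(Q)\le s$ — useless on its own. The improvement comes from working on the variety $W=\{\partial Q/\partial x_{i_1}=\cdots=\partial Q/\partial x_{i_h}=0\}$: I would argue that modulo the ideal generated by these $h$ partials, $Q$ lies in the ideal they generate, so $Q=\sum_{j=1}^h P_j\cdot(\partial Q/\partial x_{i_j})$ for some forms $P_j$ of degree $d-1$. Since the partials vanish on $W$ and $Q$ ``essentially'' vanishes to high order there, a Nullstellensatz-type argument plus a degree/homogeneity bookkeeping shows $Q$ is in the ideal without needing radicals — this is where $\textnormal{char}(\bk)\nmid d$ enters, via Euler, to control the relation. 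That would give $\rk_\bk(Q)\le h$, even better than claimed; the factor $(d-1)$ presumably absorbs the case where one must pass to a power or split each $P_j\cdot(\partial Q/\partial x_{i_j})$ further when degrees don't line up, or handle the fact that $Q$ need only vanish on $W$ to order controlled by $d-1$.

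**The complete-intersection corollary and the main obstacle.**

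For the corollary: if $\rk_\bk(Q_1,\ldots,Q_n)\ge (d-1)(2n-1)$ then by the theorem $\rk_B(Q_1,\ldots,Q_n)+n-1\ge 2n-1$, so $\rk_B\ge n$, meaning the Jacobian of $(Q_1,\ldots,Q_n)$ has rank $n$ outside a set of codimension $\ge n$. I would then run exactly the tangent-space argument sketched for quadratics in the excerpt: if some component $Y$ of $X=\{Q_1=\cdots=Q_n=0\}$ had codimension $<n$, pick a point $y\in Y$ outside the bad locus $S(Q_1,\ldots,Q_n)$ (possible since $\textnormal{codim}\,S\ge n>\textnormal{codim}\,Y$); at $y$ the Jacobian has rank $n$, so the tangent space to $X$ at $y$ has codimension $n$, forcing local codimension $n$ at $y$, a contradiction. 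Hence $X$ has pure codimension $n$ and is a complete intersection. The main obstacle I anticipate is the single-form ideal-membership step: getting from ``the $h$ partials cut out the singular locus set-theoretically'' to ``$Q$ is an explicit $\bk[x]$-linear combination of at most $(d-1)h$ forms of lower degree'' requires more than the bare Nullstellensatz, since that only puts a \emph{power} of $Q$ in the ideal; one needs the Euler identity together with an inductive or Koszul-type argument (or an appeal to the structure of the Jacobian ideal of a hypersurface) to produce the bound with the clean constant $d-1$, and tracking that this works in all characteristics not dividing $d$ — including the subtlety that the singular locus could have several components of varying dimension — is the delicate part.
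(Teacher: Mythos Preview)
The paper does not prove this theorem; it is quoted verbatim from \cite{KP21} and used as a black box. The only piece argued in the paper is the complete-intersection consequence, and your argument for that part (Jacobian has rank $n$ off a codimension-$\ge n$ set, pick $y\in Y\setminus S$, compare tangent space codimension to local codimension) is exactly the argument the paper sketches in the quadratic discussion immediately preceding the theorem. So on the portion that the paper actually contains, you are aligned with it.

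On the inequality itself, two remarks. First, your reduction step is slightly mis-ordered. You fix $a$ minimizing $\rk_\bk(\sum a_iQ_i)$ and then try to bound $\rk_B$ of \emph{that} combination by $\rk_B(Q_1,\ldots,Q_n)+n-1$, appealing to an unspecified ``semicontinuity''. That does not follow: the dimension count via the incidence variety
\[
\tilde S=\{(x,[c]):\nabla(\textstyle\sum c_iQ_i)(x)=0\}\subset\mathbb{A}^s\times\mathbb{P}^{n-1}
\]
only guarantees the Birch-rank bound for \emph{some} $[c]$, not for the Schmidt-rank minimizer. The fix is immediate: choose $[c]$ from the fiber-dimension argument, apply the single-form bound to $Q_c$, and then use $\rk_\bk(Q_1,\ldots,Q_n)\le\rk_\bk(Q_c)$, which holds for every nonzero $c$.

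Second, your single-form step is a genuine gap, as you yourself flag. From the fact that $h$ partials cut out $\{\nabla Q=0\}$ set-theoretically you cannot conclude that $Q$ lies in the ideal they generate; the Nullstellensatz only places a power of $Q$ there, and Euler's identity by itself uses all $s$ partials. Getting the constant $(d-1)$ requires a real argument (in \cite{KP21} this goes through a careful analysis relating the Schmidt decomposition to the geometry of the singular locus, not a bare ideal-membership claim), and nothing in your sketch supplies it. So as a proof of the inequality your proposal is incomplete at the decisive point, though your overall architecture (reduce to $n=1$ via the $\mathbb P^{n-1}$ family, then bound $\rk_\bk(Q)$ by a multiple of $\rk_B(Q)$) is the right shape.
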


\begin{corollary}\label{rk-to-Birch}
	Let $ \bk $ be an admissible field, and $ Q_1,\ldots,Q_n \in \bk[x_1,\ldots,x_s] $ forms of degree $ d\ge 2, $ such that $ \textnormal{char}(\bk) > d $ or $ \textnormal{char}(\bk) = 0. $ Then there exist constants $ A' = A'(\bk,d), B' = B'(\bk,d) $ such that if $ \rk_\bk(Q_1,\ldots,Q_n) \ge A'(nr)^{B'} $ then $\rk_B(Q_1,\ldots,Q_n) \ge r. $ In particular, if $r\ge n$ then $(Q_1=\ldots=Q_n =0 )$ is a complete intersection of codimension $n.$
\end{corollary}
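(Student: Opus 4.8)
We simply assemble Theorem~\ref{main-multiple} and Theorem~\ref{rk-sing}, using the elementary fact that Birch rank is insensitive to field extension. First, $\rk_B(Q_1,\ldots,Q_n)$ is by definition the codimension of an affine variety defined over $\bk$, hence it is unchanged if we pass to $\bar\bk$. Since the hypothesis $\textnormal{char}(\bk)>d$ or $\textnormal{char}(\bk)=0$ implies that $\textnormal{char}(\bk)$ does not divide $d$, Theorem~\ref{rk-sing} applies over $\bar\bk$ and gives
\[
\rk_{\bar\bk}(Q_1,\ldots,Q_n) \le (d-1)\bigl[\rk_B(Q_1,\ldots,Q_n) + n - 1\bigr].
\]

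Next I apply Theorem~\ref{main-multiple}. Every admissible field is either perfect (number fields and finite fields) or imperfect, the latter occurring exactly for finite separable extensions of $\mF_q(t)$, which are imperfect because $t$ has no $p$-th root in them. Using whichever of the two bounds of Theorem~\ref{main-multiple} is appropriate and substituting the inequality above, one obtains a bound of the shape
\[
\rk_\bk(Q_1,\ldots,Q_n) \le C\,\bigl[\,n\,(\rk_B(Q_1,\ldots,Q_n)+1)\,\bigr]^{D}
\]
for suitable $C=C(\bk,d)$, $D=D(\bk,d)$ depending only on $d$ and the constants $A(\bk,d),B(\bk,d)$ of Theorem~\ref{main-multiple}; the passage between the two shapes uses only the crude estimates $m+n\le 2mn$ and $n^{1+1/d}\le n^2$ for positive integers, so this is pure bookkeeping with constants (and produces explicit $C,D$ once the explicit $A,B$ are inserted).

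Now I argue by contrapositive. We may assume $r\ge 1$, the claim being vacuous otherwise. Suppose $\rk_B(Q_1,\ldots,Q_n)<r$; since the Birch rank is a non-negative integer this gives $\rk_B(Q_1,\ldots,Q_n)+1\le r+1\le 2r$, so the displayed bound yields $\rk_\bk(Q_1,\ldots,Q_n)\le C\,(2nr)^{D}=C\,2^{D}(nr)^{D}$. Taking $A':=C\,2^{D}+1$ and $B':=D$, the hypothesis $\rk_\bk(Q_1,\ldots,Q_n)\ge A'(nr)^{B'}$ therefore forces $\rk_B(Q_1,\ldots,Q_n)\ge r$. Finally, if in addition $r\ge n$, then $\textnormal{codim}_{\mathbb{A}^s} S(Q_1,\ldots,Q_n)=\rk_B(Q_1,\ldots,Q_n)\ge n$, and the degree-independent version of the argument recalled in the introduction applies: if a component $Y$ of $X=(Q_1=\cdots=Q_n=0)$ had codimension $<n$, then $Y\not\subset S(Q_1,\ldots,Q_n)$, so at a point $y\in Y\setminus S(Q_1,\ldots,Q_n)$ the $n\times s$ Jacobian has rank $n$, whence $s-n\ge\dim T_y X\ge\dim T_y Y\ge\dim Y>s-n$, a contradiction. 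Hence every component of $X$ has codimension exactly $n$ and $X$ is a complete intersection of codimension $n$.

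There is no serious obstacle: the only points requiring care are checking that the characteristic hypothesis of Theorem~\ref{main-multiple} implies the one needed for Theorem~\ref{rk-sing}, that Birch rank is invariant under base change to $\bar\bk$, and that each admissible field is correctly classified as perfect or imperfect so that the right bound of Theorem~\ref{main-multiple} is invoked; after that only the elementary manipulation of constants remains.
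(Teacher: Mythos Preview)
Your proof is correct and follows the same approach as the paper: combine Theorem~\ref{main-multiple} with Theorem~\ref{rk-sing} (the latter applied over $\bar\bk$) and absorb everything into the constants. The paper's proof is a two-line sketch of exactly this; you have simply written out the constant-chasing and the contrapositive explicitly, and for the complete-intersection clause you reproduce the Jacobian argument from the introduction rather than appealing to the ``in particular'' part of Theorem~\ref{rk-sing}, which amounts to the same thing.
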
 

\begin{proof}
    If $A',B'$ are taken sufficiently large then by theorem \ref{main-multiple} we obtain
    \[
        \rk_{\bar{\bk}} (Q_1,\ldots,Q_n) \ge (d-1)(r+n-1).
    \]
    Plugging this into theorem \ref{rk-sing} proves the corollary. 
\end{proof}

As we mentioned above, many applications of the Hardy-Littlewood circle method require that the Birch rank of the collection of forms be large. The above corollary which follows immediately from theorem \ref{main-multiple} allows us to replace this condition by requiring the Schmidt rank to be large. We now give some examples of this.

\subsection{Prime solutions}
Yamagishi \cite{Y18}, extending a result of Cook and Magyar \cite{CM13}, proved a local to global rule holds for counting prime solutions to systems of integral polynomial equations with sufficiently large Birch rank. Cook and Magyar conjectured that it's sufficient to assume that the Schmidt rank over $ \mQ $ is large. The conjecture was proved for polynomials of degree $ \le 3 $ in \cite{XY20}. As a consequence of our results, we can prove their conjecture for polynomials of any degree. Let $D \geq 1$, and let $\bar{Q} = (\bar{Q}_1, \ldots, \bar{Q}_D)$ be a system of polynomials in $\mathbb{Z}[x_1, \ldots, x_s]$, where
$\bar{Q}_d = ( Q_{d,1} , \ldots, Q_{d, t_d })$ is the subsystem of degree $d$ polynomials of $\bar{Q}$ $(1 \leq d \leq D)$. Let $ X = \{\bar{Q} = 0\} $ be the affine variety defined by $ \bar{Q}. $ Following Cook and Magyar, we use a different notion of rank for systems of linear polynomials. If $ l_1,\ldots,l_n $ are linear polynomials with degree one homogeneous parts $ L_1,\ldots,L_n $ we define $r (l_1,\ldots,l_n)$ to be the minimum number of non-zero coefficients
in a non-trivial linear combination
$$
\lambda_1 L_1 + \ldots + \lambda_n L_n,
$$
where $\boldsymbol{\lambda} = (\lambda_1, \ldots, \lambda_n) \in \mathbb{Q}^n \backslash \{ \mathbf{0} \}.$ Let $\Lambda$ be the von Mangoldt function,
Given $x = (x_1, \ldots , x_s)$, we let
\begin{equation*}
	\label{Lambda}
	\Lambda(x) = \Lambda(x_1) \ldots \Lambda(x_s).
\end{equation*}
Set
$$
\mathcal{M}_{\bar{Q}}(N) := \sum_{x \in [0, N]^s} \Lambda(x)  \  1_X(x).
$$
As an immediate consequence of combining corollary \ref{rk-to-Birch} with Yamagishi's result we obtain:

\begin{theorem}
	There exists a constant $ \chi = \chi(D,t_1,\ldots, t_D) $ such that if $ r(\bar{Q}_1) \ge \chi $ and $ \rk_\mQ(\bar{Q}_d) \ge \chi $ for each $ 2\le d\le D $ with $ t_d>0 $ 
	then there exist constants $\mathcal{C} = \mathcal{C}(\bar{Q})$, and
	$c>0$ such that
	$$
	\mathcal{M}_{\bar{Q} }(N) = \mathcal{C}(\bar{Q}) \ N^{s- \sum_{d = 1}^D d t_d} + O\left( \frac{
		N^{s-\sum_{d = 1}^D d t_d}
	}{(\log N)^c } \right).
	$$
	Furthermore, if the system of equations $ \bar{Q}=0 $ has a non-singular solution in $\mathbb{Z}_p^{\times}$, the units of $p$-adic integers, for each prime $p$, and the system of homgeneous equations given by the leading forms has a non-singular real point in $(0,1)^s$, then
	$$
	\mathcal{C}(\bar{Q}) > 0.
	$$
\end{theorem}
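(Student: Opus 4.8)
The plan is to obtain this statement by feeding Corollary~\ref{rk-to-Birch} into the main theorem of Yamagishi~\cite{Y18}. Recall that Yamagishi's theorem asserts \emph{precisely} the asymptotic for $\mathcal{M}_{\bar Q}(N)$ displayed above, with the stated power-saving error term, together with the positivity $\mathcal{C}(\bar Q) > 0$ under the indicated local solubility hypotheses (non-singular points in $\mZ_p^{\times}$ for every prime $p$ and a non-singular real point of the leading forms in $(0,1)^s$), under the single quantitative assumption that $r(\bar Q_1) \ge \chi_0$ and $\rk_B(\bar Q_d) \ge \chi_0$ for every $2 \le d \le D$ with $t_d > 0$. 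Here $\chi_0 = \chi_0(D, t_1, \ldots, t_D)$ is an effective constant depending only on the profile of the system, and $\rk_B$ is the Birch rank (codimension of the Birch singular locus of the leading forms), which is exactly the notion appearing in Corollary~\ref{rk-to-Birch}. Note that for the linear subsystem $\bar Q_1$ the relevant rank is already $r(\cdot)$, so no conversion is required there.

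First I would apply Corollary~\ref{rk-to-Birch} over $\bk = \mQ$: for each degree $d$ it yields constants $A'(\mQ, d), B'(\mQ, d)$ (themselves explicit, via Theorem~\ref{main-multiple} part (1) and Theorem~\ref{rk-sing}) such that $\rk_\mQ(Q_1, \ldots, Q_n) \ge A'(nr)^{B'}$ implies $\rk_B(Q_1, \ldots, Q_n) \ge r$. Taking $n = t_d$ and $r = \chi_0$ for each $d$ with $2 \le d \le D$ and $t_d > 0$, we conclude that it suffices to have $\rk_\mQ(\bar Q_d) \ge A'(\mQ, d)\,(t_d \chi_0)^{B'(\mQ, d)}$. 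Hence set
\[
\chi := \max\!\left\{ \chi_0,\ \max_{2 \le d \le D} A'(\mQ, d)\,(t_d \chi_0)^{B'(\mQ, d)} \right\},
\]
a quantity depending only on $D$ and $t_1, \ldots, t_D$, since $A'(\mQ, d)$ and $B'(\mQ, d)$ are absolute functions of $d \le D$. With this choice of $\chi$, the hypotheses $r(\bar Q_1) \ge \chi$ and $\rk_\mQ(\bar Q_d) \ge \chi$ force $r(\bar Q_1) \ge \chi_0$ and $\rk_B(\bar Q_d) \ge \chi_0$ for all relevant $d$, so Yamagishi's theorem applies verbatim and delivers both the asymptotic and the positivity statement.

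Since the argument is a direct substitution, I do not expect a genuine obstacle. The only points needing care are bookkeeping: verifying that the rank hypothesis in~\cite{Y18} is stated in terms of the same Birch rank used in Corollary~\ref{rk-to-Birch} (it is), that the threshold $\chi_0$ there depends only on $(D, t_1, \ldots, t_D)$ and not on the coefficients of $\bar Q$ (it does), and that the definitions of $\mathcal{M}_{\bar Q}(N)$ and of the singular series and singular integral comprising $\mathcal{C}(\bar Q)$ agree between the two sources. If one wants a fully explicit value of $\chi$, one simply tracks $\chi_0$ through Yamagishi's argument together with the explicit constants $A', B'$ recorded above.
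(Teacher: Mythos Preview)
Your proposal is correct and follows exactly the route the paper takes: the theorem is stated there as ``an immediate consequence of combining corollary~\ref{rk-to-Birch} with Yamagishi's result,'' and your write-up merely makes explicit the choice of $\chi$ needed to feed the Schmidt-rank hypothesis through Corollary~\ref{rk-to-Birch} into Yamagishi's Birch-rank hypothesis.
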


\subsection{Number fields}

Frei and Madritsch \cite{FM17}, extending an earlier result of Skinner \cite{SK97}, proved a local to global rule for counting integral solutions to a system of polynomial equations over a number field, assuming the Birch rank is sufficiently large. Let $K$ be a number field of degree $n$ over $\mQ$. Let $ \bar{Q} $ be a
system of polynomials in $s$ variables over the ring of integers $\mathcal{O}$
of $K$ with maximal degree $ D. $ We assume the polynomials
to be ordered by their degrees, that is, for each $d \in \{1, \ldots,
D\}$, we are given polynomials
\begin{equation*}
	\bar{Q}_d = (Q_{d,1}, \ldots, Q_{d,t_d}) \in \mathcal{O}_K[x_1, \ldots, x_s]
\end{equation*}
of degree $d$, where $t_d \geq 0$ with $t_D\geq1$. Fix an integral ideal $\eta$ of $\mathcal{O}$ and a $\mZ$-basis
$\omega_1, \ldots, \omega_n$ of $\eta$. We will also consider $\omega_1,
\ldots, \omega_n$ as an $\mR$-basis of $V := K \otimes_\mQ \mR$. By a
\emph{box $\mathcal{B}$ aligned to the basis}, we mean the set of all
$x = (x_1, \ldots, x_s) \in V^s$, where each $x_i$ has the form
$x_{i,1}\omega_1 + \cdots + x_{i,n}\omega_n$, such that the
coordinates $(x_{i,j})_{i,j}\in\mR^{ns}$ lie in a given box $B
\subseteq [-1,1]^{ns}$ with sides aligned to the coordinate axes of
$\mR^{ns}$. Given such a box $\mathcal{B}$, we define the counting function
\begin{equation*}
	N(P) := \left| \{x \in \eta^s\cap P \mathcal{B}: Q_{d,i}(x)=0 \textnormal{ for all }1\leq d \leq D, 1\leq i \leq t_d\} \right| .
\end{equation*}
Let
\begin{equation*}
	\Delta := \{ 1\leq d\leq D : t_d \geq 1\}.
\end{equation*}
Let $\mathcal{D}_0:=0$ and, for $1\leq d \leq D$,
\begin{equation*}
	\mathcal{D}_d := t_1 + 2 t_2 + \cdots + d t_d,
\end{equation*}
so that $\mathcal{D}:=\mathcal{D}_D$ is the sum of the degrees of all
the polynomials $Q_{d,i}$. 	Set
\begin{align*}
	u_d &:= \sum_{k=d}^D2^{k-1}(k-1)t_k \quad\textnormal{ for }\quad 1\leq d \leq D+1,\\
	s_0(d) &:= \mathcal{D}_d(2^{d-1}+u_{d+1}) + u_{d+1} + \sum_{j=d+1}^Du_jt_j\\
	s_0 &:= \max\{s_0(d): d\in\Delta\cup \{0\}\}.
\end{align*}

By combining corollary \ref{rk-to-Birch} with the result of Frei and Madritsch we obtain:
\begin{theorem}
	There exist constants $A(d),B(d)$ such that if for all $ d\in \Delta $ we have $\rk_K(\bar{Q}_d) > A(t_ds_0)^B$ then there is a positive
	$\delta$ with
	\begin{equation*}
		N(P) = \mathfrak{S}\mathfrak{J}\cdot P^{n(s-\mathcal{D})} + O(P^{n(s-\mathcal{D})-\delta})
	\end{equation*}
	for $P\to \infty$. Here, $\mathfrak{S}$ is the usual singular series
	and $\mathfrak{J}$ is the usual singular integral.
\end{theorem}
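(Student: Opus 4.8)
The plan is to derive this directly from Corollary~\ref{rk-to-Birch} together with the local-to-global theorem of Frei and Madritsch~\cite{FM17}, in exactly the same way the prime-solutions theorem above follows from Corollary~\ref{rk-to-Birch} and Yamagishi's result. Recall that the theorem of~\cite{FM17} is conditional on a lower bound for the Birch rank (equivalently, the Davenport $h$-invariant) of each homogeneous subsystem: there is an explicit threshold $g=g(t_d,s_0)$, polynomial --- indeed at worst linear in $s_0$ --- in the parameters $t_d$ and $s_0$, such that the asymptotic formula for $N(P)$, with the stated singular series $\mathfrak{S}$ and singular integral $\mathfrak{J}$ and some $\delta>0$, holds provided $\rk_B(\bar{Q}_d)\ge g(t_d,s_0)$ for every $d\in\Delta$. (The quantities $u_d$, $s_0(d)$, $s_0$ were set up above precisely so that the admissible bound has this shape.) Since $K$ is a number field it is admissible and has characteristic $0$, so the hypotheses of Corollary~\ref{rk-to-Birch} hold for each subsystem $\bar{Q}_d$ of degree $d\ge 2$.

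First I would fix $d\in\Delta$ with $d\ge 2$ and apply Corollary~\ref{rk-to-Birch} to the collection $\bar{Q}_d=(Q_{d,1},\ldots,Q_{d,t_d})$, taking $n=t_d$ and $r=g(t_d,s_0)$. This yields constants $A'=A'(K,d)$, $B'=B'(K,d)$ with
\[
\rk_K(\bar{Q}_d)\ge A'\bigl(t_d\, g(t_d,s_0)\bigr)^{B'}\ \Longrightarrow\ \rk_B(\bar{Q}_d)\ge g(t_d,s_0).
\]
Since $g$ is polynomial in $t_d$ and $s_0$ and $s_0\ge t_d$, the threshold $A'\bigl(t_d\,g(t_d,s_0)\bigr)^{B'}$ is bounded above by $A(t_d s_0)^B$ for suitable $A=A(K,d)$, $B=B(K,d)$ obtained by multiplying out the polynomial; with $K$ fixed throughout (as it must be, since $n=[K:\mathbb{Q}]$ appears in the exponent of $P$) we record these as $A(d),B(d)$. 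For $d=1\in\Delta$, the hypothesis $\rk_K(\bar{Q}_1)>A(t_1 s_0)^B$ forces the linear forms to be $K$-linearly independent, which already supplies the full-rank condition on the linear part required by~\cite{FM17}, so no appeal to the corollary is needed there. Hence, if $\rk_K(\bar{Q}_d)>A(t_d s_0)^B$ for all $d\in\Delta$, then $\rk_B(\bar{Q}_d)\ge g(t_d,s_0)$ for all $d\in\Delta$, which is exactly the hypothesis of~\cite{FM17}, and the asserted asymptotic for $N(P)$ follows.

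The only genuine work is bookkeeping, and I expect the one subtlety to be notational: one must extract from~\cite{FM17} the precise admissible threshold $g(t_d,s_0)$ --- where the hypothesis there is phrased as a condition on $s-\dim V_d^{*}$ for the singular locus $V_d^{*}$ of the $d$-th subsystem --- and verify that this coincides with a lower bound on $\rk_B(\bar{Q}_d)=\textnormal{codim}_{\mathbb{A}^s} S(\bar{Q}_d)$ which is polynomial in $t_d$ and $s_0$, so that it can be fed into Corollary~\ref{rk-to-Birch}. One then only has to carry the explicit constants $A',B'$ coming from Theorems~\ref{main-multiple} and~\ref{rk-sing} through this polynomial to obtain explicit $A(d),B(d)$. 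As in the prime case, the degree-one forms are handled by a separate elementary linear-algebra step and do not interact with the higher-degree analysis.
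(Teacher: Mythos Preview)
Your proposal is correct and follows exactly the route the paper takes: the paper states this theorem immediately after the sentence ``By combining corollary \ref{rk-to-Birch} with the result of Frei and Madritsch we obtain,'' and gives no further argument. Your write-up simply spells out that combination, including the harmless bookkeeping of matching the Frei--Madritsch threshold to the input $r$ in Corollary~\ref{rk-to-Birch}, which is precisely what the paper leaves implicit.
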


\subsection{Function fields}

For function fields, Lee proved a local to global rule for systems of polynomials with high Birch rank \cite{LEE11}. Suppose $ \bk = \mF_q(t) $ where $ q $ is a prime power and write $ \bA = \mF_q[t]. $ Let $ \bar{Q} = (Q_1,\ldots,Q_n)\in \bA[x_1,\ldots,x_s] $ be forms of degree $ d. $ Let $ |\cdot| $ be the absolute value on $ \bk $ given by $ |f/g|= q^{\deg f-\deg g}. $ For $ x\in\bk^s $ we define $\|x\|_\infty = \max_{i\in[s]} |x_i|. $ For $ P>0 $ set
\[ 
N(P) = \# \{x\in \bA^s: \|x\|_\infty < q^P, Q_i(x) = 0\ \forall i\in[n]\}.
\]

Corollary \ref{rk-to-Birch}, together with Lee's result yields:

\begin{theorem}
	There exist constants $A(d),B(d))$ such that the following holds: Assume $ \textnormal{char}(\bk) >d, $ $ \rk(\bar{Q}) > An^B, $ and that $ \{\bar{Q} = 0\} $ contains a non-singular point in the completion of $ \bk $ at every place. Then we have 
	\[
	N(P) = \mu q^{P(s-nd)} + O(q^{P(s-nd)-\delta})
	\]
	where $ \mu,\delta >0. $
\end{theorem}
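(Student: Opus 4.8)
The plan is to deduce this directly from Corollary \ref{rk-to-Birch} together with Lee's counting result \cite{LEE11}. Lee's theorem provides, for forms $Q_1,\ldots,Q_n\in\bA[x_1,\ldots,x_s]$ of degree $d$ with $\textnormal{char}(\bk)>d$, a threshold $r_0=r_0(n,d)$ of polynomial growth in $n$ for each fixed $d$ (of the shape $c(d)n^2$, as is typical for Birch-type theorems), such that whenever $\rk_B(Q_1,\ldots,Q_n)\ge r_0$ and $\{\bar Q=0\}$ has a non-singular point in every completion of $\bk$ (including the infinite place), one has
\[
N(P) = \mu\, q^{P(s-nd)} + O\!\left(q^{P(s-nd)-\delta}\right)
\]
with $\delta>0$, and with $\mu>0$ exactly because of the assumed local non-singular points, $\mu$ being a product of a singular series and a singular integral each of which is positive under that hypothesis. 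So the task reduces to arranging that our hypothesis on Schmidt rank forces $\rk_B(\bar Q)\ge r_0(n,d)$.

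This is precisely what Corollary \ref{rk-to-Birch} delivers: since $\bk=\mF_q(t)$ is admissible and $\textnormal{char}(\bk)>d$, there are constants $A'=A'(\bk,d)$, $B'=B'(\bk,d)$ — and by case (4) of Theorem \ref{main-multiple} these may in fact be taken independent of $q$, hence functions of $d$ alone — with the property that $\rk_\bk(\bar Q)\ge A'(nr)^{B'}$ implies $\rk_B(\bar Q)\ge r$. Apply this with $r=r_0(n,d)$. Because $r_0(n,d)$ is polynomial in $n$ for fixed $d$, the bound $A'\big(n\cdot r_0(n,d)\big)^{B'}$ is at most $A(d)\,n^{B(d)}$ for suitable $A(d),B(d)$. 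Therefore the hypothesis $\rk(\bar Q)>An^B$ in the theorem guarantees $\rk_B(\bar Q)\ge r_0(n,d)$, and Lee's theorem applies as stated, giving the asserted asymptotic with $\mu,\delta>0$.

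There is no genuine analytic content to add: the circle-method argument lives entirely in \cite{LEE11}, and the rank comparison lives in Theorem \ref{main-multiple}/Corollary \ref{rk-to-Birch}. The only points requiring care — and the closest thing to an obstacle — are bookkeeping against the precise statement in \cite{LEE11}: (i) identifying Lee's rank hypothesis with the Birch rank $\rk_B$ used in Corollary \ref{rk-to-Birch} (Lee imposes a lower bound on the codimension of the locus where the Jacobian of $\bar Q$ drops rank, which is exactly $\rk_B(\bar Q)$, so this is immediate once the definitions are unwound); (ii) checking that Lee's local solubility hypothesis coincides with ours, namely a non-singular point of $\{\bar Q=0\}$ in each completion $\bk_\fp$, and that this is what forces $\mu>0$; and (iii) confirming that Lee's threshold $r_0(n,d)$ is indeed of polynomial growth in $n$ for fixed $d$, so that it can be absorbed into $A(d)n^{B(d)}$.
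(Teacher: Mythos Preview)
Your proposal is correct and follows exactly the approach the paper takes: the theorem is stated immediately after the sentence ``Corollary \ref{rk-to-Birch}, together with Lee's result yields,'' with no further proof given. Your added bookkeeping (that the constants in case (4) of Theorem \ref{main-multiple} depend only on $d$, and that Lee's Birch-rank threshold is polynomial in $n$) is a welcome elaboration of what the paper leaves implicit.
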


\section{Reduction to multilinear polynomials and universality}

In this section we reduce theorem \ref{main-multiple} to a statement about multilinear forms. Let $ V $ be a finite dimensional vector space over $\bk$ and $ Q\in \bk[V] $ a form of degree $ d. $ Suppose either $\textnormal{char}(\bk) = 0$ or $ \textnormal{char}(\bk) > d. $ Then there is an associated multilinear form $ \tilde{Q}:V^d\to\bk$ given by $ \tilde{Q}(\bx_1,\ldots,\bx_d) = \del_{\bx_1}\ldots\del_{\bx_d}Q $ which satisfies $ Q(x) = \frac{1}{d!} \tilde{Q}(x,\ldots,x). $  For multilinear forms such as this there is a finer notion of rank.

\begin{definition}
	Let $\bk$ be any field, $ V_1,\ldots,V_d $ finite dimensional vector spaces over $ k, $ and $ P:V_1\times\ldots\times V_d\to\bk$ a multilinear form. We say that $ P $ has \emph{partition rank} $ 1 $ if there exists a partition $ [d] = I\sqcup J $ with $ I,J\neq\emptyset $ and multilinear forms $ R:\prod_{i\in I} V_i\to k, S: \prod_{j\in J} V_j\to k$ such that
	\[ 
	P(\bx) = R((\bx_i)_{i\in I})\cdot S((\bx_j)_{j\in J}).
	\]
	In general the partition rank $ \prk_\bk(P) $ is the smallest natural number $ r $ such that $ P = P_1+\ldots+P_r $ and $ P_i $ has partition rank $ 1 $ for all $ 1\le i\le r. $ 
\end{definition}

The next claim states that the partition rank of $ \tilde{Q} $ is essentially equivalent to the rank of $ Q. $

\begin{claim} \label{rank-eq}
Let $ V $ be a finite dimensional vector space over $\bk$ and $ Q\in \bk[V] $ a form of degree $ d. $ Suppose either $\textnormal{char}(\bk) = 0$ or $ \textnormal{char}(\bk) > d. $ then
	\[ 
	\rk_\bk(Q) \le \prk_\bk (\tilde{Q}) \le \binom{d}{\lfloor d/2 \rfloor}\rk_\bk(Q).
	\]
\end{claim}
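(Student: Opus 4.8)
The plan is to prove the two inequalities separately, both by explicit construction. For the lower bound $\rk_\bk(Q) \le \prk_\bk(\tilde Q)$, I would start from a partition rank decomposition $\tilde Q = \sum_{i=1}^r P_i$ where each $P_i(\bx) = R_i((\bx_j)_{j\in I_i}) \cdot S_i((\bx_j)_{j\in J_i})$ with $[d] = I_i \sqcup J_i$ a nontrivial partition. Evaluating on the diagonal $\bx_1 = \cdots = \bx_d = x$ and using $Q(x) = \frac{1}{d!}\tilde Q(x,\ldots,x)$, we get $Q(x) = \frac{1}{d!}\sum_{i=1}^r \bar R_i(x)\bar S_i(x)$, where $\bar R_i(x) = R_i(x,\ldots,x)$ is a form of degree $|I_i| \ge 1$ and similarly $\bar S_i$ has degree $|J_i| \ge 1$. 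Each summand is thus a product of two forms of positive degree, so this exhibits $Q$ as a sum of $r$ rank-one pieces (absorbing the scalar $1/d!$, which is legitimate since $\mathrm{char}(\bk) = 0$ or $> d$ makes $d!$ invertible), giving $\rk_\bk(Q) \le r = \prk_\bk(\tilde Q)$.

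For the upper bound $\prk_\bk(\tilde Q) \le \binom{d}{\lfloor d/2\rfloor}\rk_\bk(Q)$, I would begin with a Schmidt rank decomposition $Q = \sum_{i=1}^r R_i S_i$ with $\deg R_i = e_i$, $\deg S_i = d - e_i$, $1 \le e_i \le d-1$. Applying the polarization operator $\del_{\bx_1}\cdots\del_{\bx_d}$ to $Q$ and using the Leibniz-type rule for how iterated difference (or derivative) operators distribute over a product, one gets $\tilde Q(\bx_1,\ldots,\bx_d) = \sum_{i=1}^r \sum_{I \sqcup J = [d]} (\text{polarization of } R_i \text{ in the variables } I)\cdot(\text{polarization of } S_i \text{ in } J)$, where the inner sum runs over all partitions with $|I| = e_i$ and $|J| = d - e_i$ (partitions of the wrong size contribute zero since the polarization of a degree-$e_i$ form in more than $e_i$ slots vanishes, and in fewer than $e_i$ slots it isn't multilinear — more precisely the full polarization selects exactly the terms where each factor receives a number of variables equal to its degree). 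Each term in this double sum is a product of a multilinear form on $\prod_{j\in I}V_j$ with one on $\prod_{j\in J}V_j$, hence has partition rank $\le 1$. The number of set partitions of $[d]$ into a part of size $e_i$ and its complement is $\binom{d}{e_i} \le \binom{d}{\lfloor d/2\rfloor}$. Summing over $i$ gives $\prk_\bk(\tilde Q) \le \sum_{i=1}^r \binom{d}{e_i} \le \binom{d}{\lfloor d/2\rfloor} r$.

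The main obstacle is getting the combinatorial identity for the polarization of a product exactly right — specifically verifying that $\del_{\bx_1}\cdots\del_{\bx_d}(R S) = \sum_{I\sqcup J=[d]} (\del_{(\bx_i)_{i\in I}} R)(\del_{(\bx_j)_{j\in J}} S)$ and then that only the summands with $|I| = \deg R$ survive. This follows from the facts that $\del_{\bx}$ acting on a product is $\del_\bx(fg)(y) = f(y+\bx)g(y+\bx) - f(y)g(y)$, which after iterating and expanding produces exactly the stated sum over subsets (each $\del$ operator "chooses" whether its shift goes with $f$ or $g$), and that a difference operator applied to a polynomial more times than its degree yields zero while the number of applications equal to the degree produces a constant-in-$y$ multilinear form. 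I would state this as a short lemma on polarization of products, or simply cite the standard polarization formalism, since in characteristic $0$ or $> d$ the operator $\frac{1}{d!}\del_{\bx_1}\cdots\del_{\bx_d}$ agrees with the classical full polarization $\frac{1}{d!}\partial_{\bx_1}\cdots\partial_{\bx_d}$ of the form, for which this product rule is the multinomial/Leibniz expansion and is well known. Everything else is bookkeeping with degrees and the elementary bound $\binom{d}{e} \le \binom{d}{\lfloor d/2\rfloor}$.
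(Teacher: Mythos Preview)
Your proposal is correct and follows essentially the same argument as the paper: restrict a partition rank decomposition of $\tilde Q$ to the diagonal for the left inequality, and polarize a Schmidt rank decomposition of $Q$ via the Leibniz rule for $\del_{\bx_1}\cdots\del_{\bx_d}(R_iS_i)$ for the right inequality, bounding $\binom{d}{e_i}\le\binom{d}{\lfloor d/2\rfloor}$. The paper's proof is terser and omits the justification of the product rule that you spell out, but the content is the same.
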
 

\begin{proof}
	For the left inequality, suppose $ \tilde{Q}(\bx) = \sum_{i=1}^r R(\bx)\cdot S(\bx) $ and plug in 
	\[ 
	Q(x) = \frac{1}{d!} \tilde{Q}(x,\ldots,x) = \frac{1}{d!} \sum_{i=1}^r R_i(x,\ldots,x)\cdot S_i(x,\ldots,x).
	\]
	For the right inequality, suppose $ Q(x) = \sum_{i=1}^r R_i(x)S_i(x) $ and take derivatives
	\[  
	\tilde{Q}(\bx_1,\ldots,\bx_d) = \sum_{i=1}^r \sum_{J\subset [d], |J|=\deg 
		R_i}  \tilde{R_i}((\bx_j)_{j\in J}) \tilde{S_i}((\bx_\bk)_{k\in [d]\setminus J}).
	\]
\end{proof} 

Partition rank is also defined for collections of multi-linear forms.

\begin{definition}
	 Let $P_1,\ldots,P_n:V_1\times\ldots\times V_d\to\bk$ be multilinear forms. Their partition rank is
	  \[
	  \prk_\bk(P_1,\ldots,P_n) = \min_{0\neq a\in \bk^n} \prk_\bk \left( \sum_{i=1}^n a_i\cdot P_i \right).
	  \]
\end{definition}

By claim \ref{rank-eq}, we can deduce theorem \ref{main-multiple} from a corresponding theorem for multi-linear forms. 

\begin{theorem}[theorem \ref{main-multiple} for multi-linear forms]\label{main-ml}
	Let $\bk$ be an admissible field, $ V_1,\ldots,V_d $ finite dimensional vector spaces over $ \bk, $ and \linebreak $P_1,\ldots,P_n:V_1\times\ldots\times V_d\to\bk$ multi-linear forms. There exist constants $  \tilde{A}(\bk,d) , \tilde{B}(\bk,d) $ such that if $ \bk $ is a perfect field we have 
	\[ 
	\prk_\bk(P_1,\ldots,P_n) \le \tilde{A} [n\cdot \prk_{\bar{\bk}}(P_1,\ldots,P_n)+1]^{\tilde{B}},
	\]
	and if $ \bk $ is an imperfect field we have
	\[ 
	\prk_\bk(P_1,\ldots,P_n) \le \tilde{A} [n^{1+1/d}\cdot (\prk_{\bar{\bk}}(P_1,\ldots,P_n)+1)]^{\tilde{B}}.
	\]
\end{theorem}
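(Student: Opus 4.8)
The plan is to establish Theorem~\ref{main-ml} by an inductive reduction on the number of variable spaces $d$, combined with a Galois-descent argument to pass from $\bar\bk$ to $\bk$. The starting observation is that partition rank over $\bar\bk$ being small means there is a single nonzero $a\in\bar\bk^n$ and a decomposition $\sum a_iP_i=\sum_{j=1}^r R_jS_j$ into partition-rank-one pieces. Since everything in sight is defined over $\bk$, the coefficient vector $a$ and the forms $R_j,S_j$ live in some finite extension $\bk'/\bk$; by averaging over the Galois orbit (or taking traces, using $\mathrm{char}(\bk)>d$ or $0$ to ensure the relevant multiplicities are invertible) one produces a $\bk$-rational linear combination whose partition rank over $\bk$ is controlled by $r$ times the degree $[\bk':\bk]$. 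The real content is to bound $[\bk':\bk]$ polynomially in $r$ and $n$ — this is where admissibility of $\bk$ enters, and where the finite-field case is proved first and then bootstrapped to obtain the $\mathbb Q$ case via reduction modulo primes.

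Concretely, I would first prove the finite-field case directly. Here one exploits that over a finite field $\mathbb F_q$, low partition rank is equivalent to the associated multilinear variety having few points (a bias/equidistribution statement, as in the work of Green–Tao, Kazhdan–Ziegler, Milicevic, etc.). If $\prk_{\bar\bk}(P_1,\dots,P_n)=k$ then after base change to $\bar{\mathbb F}_q=\bigcup \mathbb F_{q^m}$ there is a witnessing field $\mathbb F_{q^m}$; one shows $m$ can be taken bounded in terms of $k,n,d$ alone (not $q$), because the decomposition witnessing partition rank $k$ can be encoded by boundedly many polynomial equations in boundedly many coefficients, so it already has a solution over an extension of bounded degree — here ``bounded'' should be the tower-of-exponentials $2^{2^{O(d^2)}}$ appearing in the constants, reflecting an effective-Nullstellensatz or elimination-theory input. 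Then restriction of scalars $\mathbb F_{q^m}\to\mathbb F_q$ converts an $\mathbb F_{q^m}$-rational partition-rank-$k$ decomposition of one linear combination into an $\mathbb F_q$-rational decomposition of partition rank at most $m\cdot k$ (roughly), which after absorbing the $n$-dependence from the $\min$ over coefficient vectors gives the claimed polynomial bound.

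Next, for $\bk=\mathbb Q$, I would reduce modulo a large prime $p>d$: a $\mathbb Q$-form of large partition rank stays large modulo $p$ for all but finitely many $p$, and conversely one lifts the finite-field bound. The subtlety is that partition rank can \emph{drop} under reduction mod $p$, so one needs that for $p$ sufficiently large (larger than a bound depending only on $d$ and the rank, via the third item's $F(\cdot)$ threshold) the rank is preserved; this is exactly why item (3) of Theorem~\ref{main-multiple} with the hypothesis $|\bk|\ge F(\cdots)$ is isolated as a separate statement — it is the engine for the $\mathbb Q$ case. For number fields and finite separable extensions of $\mathbb F_q(t)$ one localizes at a place of good reduction, applies the residue-field (finite-field) result, and uses that the ring of integers / coordinate ring is finitely generated to globalize; the factor $\binom{d}{\lfloor d/2\rfloor}$ powers in the constants come from Claim~\ref{rank-eq} when translating back and forth between forms and multilinear forms, and the extra power $2d$ versus $d$ reflects the two-step localize-then-descend argument. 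The imperfect-field case carries the extra $n^{1/d}$ because inseparable descent costs a factor of $p^{\le d}$ rather than a separable degree, and one optimizes the tradeoff between the number of Frobenius-twists used and the rank blow-up, yielding the $n^{1+1/d}$ shape.

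The main obstacle I anticipate is the uniform bound on the degree of the witnessing extension $\bk'/\bk$ — i.e., showing that if a system of multilinear forms has partition rank $k$ over $\bar\bk$, then it already has partition rank $\le k$ (or close to it) over an extension whose degree is bounded solely in terms of $k,n,d$, with no dependence on the field or on the dimensions $\dim V_i$. The dimension-independence is the crux: a priori the partition-rank-one pieces $R_jS_j$ involve the full coordinate spaces, so one cannot naively cut down to a bounded system of equations. The resolution should be a \emph{regularization / rank-stability} step: pass to a bounded-complexity ``factor'' generated by boundedly many multilinear forms capturing the decomposition, so that the algebraic relations defining the decomposition live in boundedly many variables, at which point standard bounds on solvability of polynomial systems over extensions (Chevalley–Warning-type counting over finite fields, or effective Nullstellensatz over $\mathbb Q$) apply. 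Getting this regularization to interact correctly with the $\min$ over the $n$ linear combinations — so that a single extension works for the optimal combination — is the technically delicate point, and is presumably where the $n^{1+1/d}$ (imperfect) versus $n$ (perfect) distinction is actually forced.
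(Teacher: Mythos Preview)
Your proposal has a genuine gap, and the paper takes a route that avoids it entirely.

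The gap is exactly the one you flag yourself: a dimension-independent bound on the degree $[\bk':\bk]$ of an extension witnessing $\prk_{\bar\bk}=k$. You hope to obtain this via a regularization step that reduces the decomposition to ``boundedly many variables,'' but this is circular. Regularization lemmas of the Green--Tao/Kazhdan--Ziegler type replace a form by a high-rank factor plus a structured piece, and the complexity of that factor is bounded only when one already controls the rank over the field in question. Here the rank over $\bk$ is precisely what you are trying to bound, so you cannot invoke regularization over $\bk$; and regularizing over $\bar\bk$ just pushes the descent problem to the factor. Nothing in your outline actually produces a bound on $m$ (in the finite-field case) or on $[\bk':\bk]$ that is independent of $\dim V_i$, and I do not know of any such bound in the literature. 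Effective Nullstellensatz bounds do depend on the number of variables, so the appeal to ``standard bounds on solvability of polynomial systems'' does not close the gap either.

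The paper's argument is structurally different and never attempts to descend a $\bar\bk$-decomposition. Instead it proves a \emph{universality} statement (Theorem~\ref{universal}): if $\prk_\bk(P_1,\ldots,P_n)$ is large, then for \emph{any} target collection $R_1,\ldots,R_n:(\bk^t)^d\to\bk$ there exist $\bk$-linear maps $T_i$ with $P_j\circ(T_1\times\cdots\times T_d)=R_j$. One then specializes to diagonal targets $R_j=\sum_k\prod_i\bx_i(k)$ whose $\bar\bk$-partition rank is computed explicitly; since $\prk_{\bar\bk}$ can only drop under linear substitution, this lower-bounds $\prk_{\bar\bk}(P_1,\ldots,P_n)$. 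Universality is established by rewriting it as solvability over $\bk$ of a system of $nt^d$ multilinear equations of the same partition rank (Claim~\ref{rank-universal}), and then proving solvability field-by-field: Fourier/bias estimates for finite fields, a scaling argument plus model theory for $\mQ$, and a Schmidt-type Birch-singular-locus/point-counting argument for number fields and function fields. The Galois descent you describe does appear, but only in the much lighter role of reducing the general-$n$ perfect case to $n=1$ (the Polishchuk argument on page~\pageref{perfect-fields}), not as the main engine. Your imperfect-field explanation via inseparable descent and Frobenius twists is also not what happens; the $n^{1+1/d}$ simply falls out of plugging $t=n\bar r$ into the universality bound $C(nt^d)^D$.
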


\begin{proof}[Proof of theorem \ref{main-multiple} assuming theorem \ref{main-ml}]
    We will show that theorem \ref{main-multiple} holds with $ A = \tilde{A}\binom{d}{\lfloor d/2 \rfloor}^{\tilde{B}}$ and $ B = \tilde{B}. $  Let $ \bk $ be an admissible field and let $ Q_1,\ldots,Q_n $ be degree $ d $ polynomials with $ \textnormal{char}(\bk) > d $ or $ \textnormal{char}(\bk) = 0. $ Let $ \tilde{Q_1},\ldots,\tilde{Q_n} $ be the associated multi-linear forms. If $ \bk $ is perfect then by claim \ref{rank-eq} and theorem \ref{main-ml} 
	\begin{multline*}
	\rk_\bk(Q_1,\ldots,Q_n) \le \prk_\bk(\tilde{Q_1},\ldots,\tilde{Q_n}) \le \tilde{A} [n\cdot \prk_{\bar{\bk}}(\tilde{Q_1},\ldots,\tilde{Q_n})+1]^{\tilde{B}} \\
	\le \tilde{A} \left[ n\cdot \binom{d}{\lfloor d/2 \rfloor} \rk_{\bar{\bk}}(Q_1,\ldots,Q_n)+1 \right] ^{\tilde{B}} \le \tilde{A}\binom{d}{\lfloor d/2 \rfloor}^{\tilde{B}} \left[ n\cdot  \rk_{\bar{\bk}}(Q_1,\ldots,Q_n)+1 \right] ^{\tilde{B}}.
	\end{multline*}
	If $ \bk $ is imperfect then again by claim \ref{rank-eq} and theorem \ref{main-ml} 
	\begin{multline*}
		\rk_\bk(Q_1,\ldots,Q_n) \le \prk_\bk(\tilde{Q_1},\ldots,\tilde{Q_n}) \le \tilde{A} [n^{1+1/d}\cdot (\prk_{\bar{\bk}}(\tilde{Q_1},\ldots,\tilde{Q_n})+1)]^{\tilde{B}} \\
		\le \tilde{A} [n^{1+1/d}\cdot (\binom{d}{\lfloor d/2 \rfloor}\rk_{\bar{\bk}}(Q_1,\ldots,Q_n)+1)]^{\tilde{B}} \le \tilde{A}\binom{d}{\lfloor d/2 \rfloor}^{\tilde{B}} \left[ n^{1+1/d}\cdot (\rk_{\bar{\bk}}(Q_1,\ldots,Q_n)+1) \right] ^{\tilde{B}}.
	\end{multline*}
\end{proof}

We prove theorem \ref{main-ml} for all of the above fields (except $ \mQ $) via a universality result, which may be of independent interest. This generalizes a universality result of Kazhdan and Ziegler appearing in \cite{KZ21}. 

\begin{theorem}[Universality]\label{universal}
	Let $\bk$ be an admissible field and $ R_1,\ldots,R_n:(\bk^t)^d\to\bk$ multilinear forms of degree $ d. $ There exist constants $ C(\bk,d) , D(\bk,d) $ such that whenever $ P_1,\ldots,P_n:V_1\times\ldots\times V_d\to\bk$ are multilinear forms with $ \prk_\bk(P_1,\ldots,P_n) > C(nt^d)^D, $ there exist linear maps $T_i: \bk^t \to V_i$ such that $P_j\circ (T_1\times\ldots\times T_d) = R_j. $ 
\end{theorem}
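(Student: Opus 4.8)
The plan is to find the linear maps $T_i$ one coordinate direction at a time, exploiting the fact that high partition rank is robust under specialization. Fix the target forms $R_1,\ldots,R_n$ on $(\bk^t)^d$. Think of choosing $T_i$ as choosing, for each $i\in[d]$ and each $\ell\in[t]$, a vector $v^{(i)}_\ell = T_i(e_\ell)\in V_i$; then the condition $P_j\circ(T_1\times\cdots\times T_d) = R_j$ is a system of $n t^d$ polynomial identities in the unknown vectors, one for each multi-index $(\ell_1,\ldots,\ell_d)\in[t]^d$ and each $j$. I would build these vectors greedily: having chosen partial data, I want to extend it so that the "restricted'' family of multilinear forms obtained by plugging the chosen vectors into some of the slots still has partition rank large enough to continue. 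The key quantitative input is a \emph{regularization / subadditivity} estimate: if $P:V_1\times\cdots\times V_d\to\bk$ has partition rank $\geq \rho$ and $v\in V_1$, then the form $P(v,\cdot,\ldots,\cdot):V_2\times\cdots\times V_d\to\bk$ has partition rank $\geq \rho - O(1)$ for \emph{every} $v$, and more importantly, for a generic choice of $v$ (or a choice in a suitable subspace) one can keep the rank from dropping by more than a controlled amount while simultaneously realizing prescribed values of the lower-degree forms $R_j$ restricted to the corresponding slots.

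More concretely, the induction is on $d$. For $d=1$ the forms $P_j$ are linear functionals, partition rank is either $0$ or $\infty$ (the latter meaning "not identically zero''), and the statement reduces to the observation that $n$ functionals of "rank $>C n t$" — i.e. a system whose nonzero linear combinations are all nonzero — can be pulled back to match any prescribed $n$ functionals on $\bk^t$ by choosing $T_1$ appropriately; this is linear algebra over $\bk$ (here admissibility is irrelevant, or used only to guarantee enough points when $\bk$ is finite, which is why the bound grows with $t$). For the inductive step, I would first peel off the last variable: regard $P_j$ as a multilinear map $V_d\to \mathrm{Hom}(V_1\times\cdots\times V_{d-1},\bk)$, and use a high-rank $\Rightarrow$ "many independent restrictions'' lemma to choose $v^{(d)}_1,\ldots,v^{(d)}_t\in V_d$ such that the $(d-1)$-linear forms $P_j(\cdot,\ldots,\cdot,v^{(d)}_{\ell_d})$, over all $j$ and $\ell_d$, still have collective partition rank $> C'(n t\cdot t^{d-1})^{D'}$ for suitable $C',D'$, \emph{and} such that the degree-$d$ piece of each $R_j$ is consistent with having $T_d(e_{\ell_d}) = v^{(d)}_{\ell_d}$. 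Apply the inductive hypothesis to this enlarged collection (now $nt$ forms in $d-1$ variables on spaces of dimension $t$) to obtain $T_1,\ldots,T_{d-1}$, and check that the resulting $T_1,\ldots,T_d$ do the job. Tracking how $C,D$ blow up through $d$ steps of this recursion yields the claimed shape of the constants (doubly exponential in $d^2$ in the finite-field case, because each regularization step over a finite field can itself cost a tower).

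The hard part will be the regularization lemma controlling the partition rank drop under specialization: one needs that for a high-rank system, the set of "bad'' specializations $v^{(d)}_\ell$ (those causing the rank to collapse, or obstructing the prescribed $R_j$-values) is contained in a low-dimensional — or low-degree — subvariety, so that it can be avoided even over a finite field, provided $|\bk|$ is large or provided we are content with the weaker bounds of part (2). This is where I expect to invoke (a strengthening of) the Kazhdan–Ziegler universality machinery from \cite{KZ21}, together with the fact that partition rank is "syntactically'' well-behaved: a drop in partition rank of a multilinear family after specialization exhibits a low-rank decomposition whose coefficients are themselves polynomial in the specialization, so the locus where the rank is too small is cut out by the vanishing of suitable rank-$r$ determinantal-type conditions. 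A secondary technical point, handled separately, is the dichotomy perfect vs. imperfect $\bk$: in the imperfect case one cannot freely extract $d$-th roots, and the $n^{1+1/d}$ factor in Theorem \ref{main-ml} (as opposed to $n$) accounts for the loss incurred when passing between a form and its associated multilinear form over a non-perfect field; for the present universality statement, which is stated directly for multilinear forms, this issue does not arise and only resurfaces when deducing Theorem \ref{main-ml}.
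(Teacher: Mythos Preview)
Your approach is genuinely different from the paper's, and the step you flag as ``the hard part'' is a real gap that is not filled by anything you cite.

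The paper does not induct on $d$ or use any specialization/regularization argument. Instead it unpacks the equations $P_j\circ(T_1\times\cdots\times T_d)=R_j$ as a system of $nt^d$ multilinear equations $P^l_{j_1,\ldots,j_d}(A_1,\ldots,A_d)=r^l_{j_1,\ldots,j_d}$ in the entries of the matrices $A_i$ representing $T_i$. A short computation (Claim~\ref{rank-universal}) shows this enlarged family of $nt^d$ forms has \emph{exactly} the same collective partition rank as $(P_1,\ldots,P_n)$, since each $P^l_{j_1,\ldots,j_d}$ is just $P_l$ with relabeled variables. So the task becomes: solve a high-rank system of multilinear equations over $\bk$. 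This is done by field-specific tools: over finite $\bk$, a bias--rank inequality (Mili\'cevi\'c/Janzer, Cohen--Moshkovitz) plus a standard character-sum count shows the solution set is nonempty; over number fields and function fields, a pigeonhole/point-counting argument on the Birch singular locus (Proposition~\ref{rank-sing-families}) shows that an unsolvable system would force low rank. No induction, no rank-preservation lemma.

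Your inductive plan rests entirely on a lemma of the form: if $\prk_\bk(P_1,\ldots,P_n)$ is large, one can choose $v_1,\ldots,v_t\in V_d$ so that the $nt$ restricted $(d{-}1)$-linear forms $P_j(\,\cdot\,,v_\ell)$ still have large collective partition rank. You do not prove this, and it does not follow from Kazhdan--Ziegler universality for a single polynomial. Your sketch for proving it --- that the ``bad'' locus is cut out by ``determinantal-type conditions'' --- does not work as stated: partition rank is not determinantal, and the locus $\{v:\prk_\bk(P(\cdot,v))\le r\}$ is not known to be Zariski-closed (or even a priori constructible in a useful way) over a general field. Any proof of this lemma I can see would go through the same bias or singular-locus machinery the paper uses directly, so the induction buys nothing. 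Two smaller issues: your $d=1$ discussion is slightly garbled (high collective partition rank for linear functionals just means linear independence, after which the claim is linear algebra over any field); and there is no ``consistency with $R_j$'' constraint to impose when choosing the $v^{(d)}_\ell$ --- the targets enter only at the final inductive call, so that clause in your inductive step is vacuous.
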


We now explain how theorem \ref{main-ml} follows from theorem \ref{universal}. Theorem \ref{universal} will be proved in the following sections.

\begin{proof}[Proof of theorem \ref{main-ml} assuming theorem \ref{universal}]
	
 	We will show that theorem \ref{main-ml} holds with $\tilde{A} = C, \tilde{B} = dD.$ Supposing
 	$$\prk_\bk(P_1,\ldots,P_n) > C(n^{1+1/d} \bar{r})^{dD},$$
 	we will show that $ \prk_{\bar{\bk}} (P_1,\ldots,P_n) \ge \bar{r}. $ By theorem \ref{universal} with $ t=n\bar{r}, $ there exist linear maps  $ T_i:\bk^t\to V_i $ such that
	\[ 
	P_j\circ T = \sum_{k=(j-1)\bar{r}+1}^{j\bar{r}} \prod_{i=1}^d \bx_i (k) =: D_j.
	\]
	Clearly $ \prk_{\bar \bk} (P_1,\ldots,P_n) \ge \prk_{\bar \bk} (D_1,\ldots,D_n). $ 
	To show that the right hand side equals $ \bar{r}, $ we need the following simple claim:
	
	\begin{claim}
		Let $ \bk $ be a field and $ P:V_1\times\ldots\times V_d \to \bk $ a multilinear form. The variety 
		\[ 
		Z_P := \{(v_2,\ldots,v_d): P(\cdot,v_2,\ldots,v_d) \equiv 0\}
		 \]
		satisfies
		\[ 
		\textnormal{codim}_{V_2\times\ldots\times V_d} Z_P \le \prk_\bk(P).
		 \]    
	\end{claim}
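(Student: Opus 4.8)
The plan is to prove the contrapositive-flavored bound directly by induction on $\prk_\bk(P)$, exhibiting for each summand of a partition-rank decomposition a bounded number of linear conditions that force membership in $Z_P$. Write $r = \prk_\bk(P)$ and fix a decomposition $P = \sum_{i=1}^r A_i \cdot B_i$, where each $A_i$ depends only on the coordinates $(\bx_j)_{j\in I_i}$ and $B_i$ only on $(\bx_j)_{j\in J_i}$ for some partition $[d] = I_i \sqcup J_i$ into nonempty parts. I would single out coordinate $1$: after possibly swapping the roles of $A_i$ and $B_i$, assume $1 \in I_i$ for every $i$, so that $A_i = A_i(\bx_1, (\bx_j)_{j\in I_i\setminus\{1\}})$ is linear in $\bx_1$. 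The key observation is that $P(\cdot, v_2,\ldots,v_d) \equiv 0$ as soon as all the ``coefficient functionals'' obtained by plugging $(v_2,\ldots,v_d)$ into the non-$\bx_1$ slots vanish simultaneously.

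The main step is therefore to set up the right $r$ linear conditions on $(v_2,\ldots,v_d)$. For each $i$, specializing $A_i$ at $(v_j)_{j\in I_i\setminus\{1\}}$ gives a linear functional $\ell_i \in V_1^*$, and specializing $B_i$ at $(v_j)_{j\in J_i}$ gives a scalar $c_i \in \bk$. Then $P(\cdot, v_2,\ldots,v_d) = \sum_{i=1}^r c_i\, \ell_i$ as an element of $V_1^*$. This need not be identically zero merely because each $c_i$ is arbitrary, but it certainly vanishes whenever, say, every $c_i = 0$; and each condition ``$c_i = 0$'' is a nonzero (hence codimension-one, or all-of-space) polynomial condition on the tuple $(v_j)_{j\in J_i} \in \prod_{j\in J_i} V_j$, pulled back to $V_2\times\cdots\times V_d$. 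Intersecting these $r$ conditions cuts down the codimension by at most $r$, and any point in the intersection lies in $Z_P$. Hence $\operatorname{codim}_{V_2\times\cdots\times V_d} Z_P \le r = \prk_\bk(P)$. One has to be mildly careful that a ``condition'' which is the zero polynomial (because that $B_i$, after specialization, is identically zero, or the whole $A_i$ term is degenerate) contributes $0$ to the codimension, which only helps; and that the variety $Z_P$ is indeed cut out by these equations set-theoretically at the points we use — but we only need the containment $\{c_1 = \cdots = c_r = 0\} \subseteq Z_P$, which is immediate.

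The one subtlety worth flagging is the base of the induction / the degenerate cases: if some $A_i$ does not in fact depend on $\bx_1$ (i.e. $1 \notin I_i$ even after the swap — impossible once we enforce the swap, but the swap requires $1$ to be in exactly one part, which it is since $I_i, J_i$ partition $[d]$), or if $d = 1$ so that $V_2\times\cdots\times V_d$ is a point and $Z_P$ is either empty or everything, depending on whether $P \equiv 0$ — in the latter case $\prk_\bk(P) = 0$ exactly when $P \equiv 0$, and the inequality $\operatorname{codim} \le 0$ reads correctly. For $d \ge 2$ the argument above is uniform and requires no induction at all: it is a one-line consequence of the decomposition. I expect the only thing needing care in the write-up is the bookkeeping that each $c_i = 0$ is a genuine polynomial equation on the ambient affine space $V_2\times\cdots\times V_d$ (via the projection to $\prod_{j\in J_i}V_j$) and that the codimension of a finite intersection of hypersurfaces is at most the number of them, with equality failing only downward.
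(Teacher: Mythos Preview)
Your argument is correct and is essentially the same as the paper's: after arranging that the index $1$ lies on the $A_i$ side of each partition, you set the other factor $B_i$ (the paper's $R_k$) to zero, obtaining a subvariety of codimension at most $r$ contained in $Z_P$. The induction framing and the discussion of degenerate cases are unnecessary---the paper dispatches the whole claim in two sentences---but the mathematical content is identical.
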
 

	\begin{proof}
		Suppose $ P = \sum_{k=1}^r R_k((\bx_i)_{i\in I_k})\cdot S_k((\bx_j)_{j\in [d]\setminus I_k}) $ and assume without loss of generality that $ 1\notin I_k $ for all $ k. $ Then the variety $ \{R_1 = \ldots = R_r = 0\} $ has codimension $ \le r $ and is contained in $ Z_P. $
	\end{proof}

	\begin{lemma}
		$ \prk_{\bar \bk} (D_1,\ldots,D_n) = \bar{r}. $
	\end{lemma}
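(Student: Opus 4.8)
The plan is to prove the lemma in two halves: the upper bound $\prk_{\bar\bk}(D_1,\ldots,D_n)\le\bar r$, which is immediate, and the lower bound $\prk_{\bar\bk}(D_1,\ldots,D_n)\ge\bar r$, which is the real content. For the upper bound, note that each $D_j$ is a sum of $\bar r$ products of linear forms $\prod_{i=1}^d\bx_i(k)$, hence has partition rank at most $\bar r$; the same holds for any nonzero linear combination $\sum_j a_j D_j$, since distinct $j$ involve disjoint blocks of coordinates, so $\sum_j a_j D_j$ is again a sum of at most $\bar r$ (actually exactly $\bar r\cdot\#\{j:a_j\neq0\}$, but one can always choose $a$ supported on a single $j$) rank-one pieces. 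Thus $\prk_{\bar\bk}(D_1,\ldots,D_n)\le\bar r$.

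For the lower bound, I would fix a nonzero $a\in\bar\bk^n$ and show $\prk_{\bar\bk}\big(\sum_j a_j D_j\big)\ge\bar r$. After reindexing, $\sum_j a_j D_j = \sum_{k=1}^{m} c_k\prod_{i=1}^d\bx_i(k)$ for some scalars $c_k$, where $m = \bar r\cdot\#\{j:a_j\neq 0\}\ge\bar r$ and at least $\bar r$ of the $c_k$ are nonzero (indeed all $c_k$ coming from a given $j$ with $a_j\neq0$ equal $a_j$). So it suffices to prove: a diagonal-type form $\sum_{k=1}^{\bar r}\prod_{i=1}^d\bx_i(k)$ on $(\bar\bk^{\bar r})^d$ (restricting to the relevant coordinates and using that partition rank does not increase under restriction to a coordinate subspace — which gives a lower bound the wrong way, so instead I observe partition rank is monotone under the substitution setting unused coordinates to zero only from above; the clean statement is that the full sum over $m\ge\bar r$ terms contains this diagonal as a ``subform'' on a coordinate subspace, and partition rank of the restriction is a lower bound is false in general). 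Let me instead use the Claim proved just above: apply it to $P = \sum_{k=1}^m c_k\prod_{i=1}^d\bx_i(k)$. The variety $Z_P$ consists of those $(v_2,\ldots,v_d)$ with $P(\cdot,v_2,\ldots,v_d)\equiv 0$; writing out the linear form in $\bx_1$, this vanishes iff for every coordinate $\ell$ of $V_1$ the corresponding coefficient vanishes, and the $\bar r$ (or $m$) terms involve distinct coordinates $\bx_1(1),\ldots,\bx_1(m)$, so $Z_P$ is cut out by exactly the $m\ge\bar r$ independent equations $c_k\,\bx_2(k)\cdots\bx_d(k) = 0$. Since the $c_k$ that are nonzero number at least $\bar r$, and these equations involve disjoint variables, $Z_P$ has codimension exactly equal to the number of nonzero $c_k$, which is $\ge\bar r$. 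By the Claim, $\prk_{\bar\bk}(P)\ge\operatorname{codim} Z_P\ge\bar r$.

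Combining, $\prk_{\bar\bk}\big(\sum_j a_j D_j\big)\ge\bar r$ for every nonzero $a$, so $\prk_{\bar\bk}(D_1,\ldots,D_n)\ge\bar r$; together with the upper bound this gives equality.

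The step I expect to be the main obstacle is getting the codimension count for $Z_P$ exactly right — in particular being careful that when we form $\sum_j a_j D_j$ the coefficients on the monomials $\prod_i\bx_i(k)$ genuinely do not cancel (they don't, because the index blocks for different $j$ are disjoint, so each monomial $\prod_i\bx_i(k)$ appears in exactly one $D_j$), and that the equations defining $Z_P$ really are in disjoint variables and hence cut out a subvariety of the expected codimension rather than something smaller. Once that bookkeeping is pinned down, the Claim does the rest.
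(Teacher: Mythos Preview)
Your argument is correct and follows essentially the same route as the paper: upper bound by exhibiting $D_1$, lower bound by applying the Claim to an arbitrary nonzero combination $D=\sum_j a_j D_j$ and computing $\operatorname{codim} Z_D\ge\bar r$. The paper makes the codimension computation explicit by writing $Z_D=\bigcup_{\sigma:[m\bar r]\to[2,d]}\{\bx_{\sigma(j)}(j)=0:\forall j\}$ as a union of linear subspaces of codimension $m\bar r$, whereas you invoke the product structure coming from disjoint variables --- these are two phrasings of the same fact, and your digression about restriction to coordinate subspaces (which you rightly abandon) can simply be deleted.
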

	
	\begin{proof}
		Obviously the partition rank is $ \le \bar{r}. $ On the other hand, let $ D = \sum_{j=1}^n a_jD_j $ be a non-trivial linear combination, and assume without loss of generality that $ a_1,\ldots,a_m \neq 0 $ and $ a_{m+1} = \ldots = a_n = 0 $ for some $ m\ge 1. $ We can decompose $ Z_D $ explicitly,
		\[ 
		Z_D = \{ \prod_{i=2}^d \bx_i (k) = 0 : \forall k\in [m\bar{r}] \} = \bigcup_{\sigma:[m\bar{r}]\to[2,d]} \{ \bx_{\sigma(j)} (j) = 0 : \forall j\in [m\bar{r}] \}.
		 \] 
		 $ Z_D $ is a finite union of linear subspaces of codimension $ m\bar{r}, $ so \linebreak
   $ \textnormal{codim}_{V_2\times\ldots\times V_d} Z_D = m\bar{r} \ge \bar{r}. $ By the previous claim, we get \linebreak $ \prk_{\bar \bk} (D_1,\ldots,D_n) \ge \bar{r}. $
	\end{proof}

This completes the proof of theorem \ref{main-ml} for all fields in the case $ n=1 $ and for imperfect fields in the case of general $ n. $ To obtain our improved bounds for perfect fields and general $ n $ we use the following argument which we learned from A. Polishchuk. 

\begin{proof}[Proof of theorem \ref{main-ml}  for perfect fields assuming the case $ n=1 $] \label{perfect-fields}
	Let
	\[ 
	\bar{r} = \prk_{\bar{\bk}}(P_1,\ldots,P_n) , \mathcal{P} = \textnormal{sp}_{\bar{\bk}} (P_1,\ldots,P_n) 
	 \]
	Choose $ 0\neq P\in \mathcal{P} $ with $ \rk_{\bar{\bk}} (P) = \bar{r}. $ Let $ \mathcal{P}'\subset \mathcal{P} $ be the $ {\bar{\bk}} $-subspace spanned by $ P $ and its Galois conjugates. By Galois descent, $ \mathcal{P}' = \textnormal{sp}_{\bar{\bk}} (P'_1,\ldots,P'_m) $ for some polynomials $ P'_1,\ldots,P'_m\in \textnormal{sp}_\bk (P_1,\ldots,P_n). $ Let $ \mathcal{P'}(\bk) := \textnormal{sp}_\bk (P'_1,\ldots,P'_m)  $ and choose $ 0\neq P'\in \mathcal{P'}(\bk). $ Since $ \dim \mathcal{P'}\le n, $ there are Galois conjugates $ \sigma_1\cdot P,\ldots,\sigma_n\cdot P $ such that $ P'\in \textnormal{sp}_{\bar{\bk}} (\sigma_1\cdot P,\ldots,\sigma_n\cdot P). $ By our assumption on $ P $ this means $ \rk_{\bar{\bk}}(P') \le n\bar{r}, $ which by theorem \ref{main-ml} implies
	\[ 
	\rk_\bk (P') \le \tilde{A} (n\bar{r}+1)^{\tilde{B}}.
	 \]
\end{proof}

\end{proof}

We now begin the proof of theorem \ref{universal}, which will be completed in subsequent sections. Assume without loss of generality that $ V_i = \bk^{s_i}. $ Then linear maps $ T_i : \bk^t\to V_i $ are of the form $ T_i(x) = A_i\cdot x $ where $ A_i\in \mathcal{M}_{s_i\times t}(\bk). $ For $ j_1,\ldots,j_d,l $ the coefficient of $ y^1_{j_1}\cdot\ldots\cdot y^d_{j_d} $ in $ P_l\circ (T_1\times\ldots\times T_d) $ is given by a multilinear form
\[ 
P^l_{j_1,\ldots,j_d}: \mathcal{M}_{s_1\times t}(\bk)\times\ldots\times\mathcal{M}_{s_d\times t}(\bk) \to \bk, 
\]
namely $ P^l_{j_1,\ldots,j_d}(A_1,\ldots,A_d) = \left( P_l\circ (A_1\times\ldots\times A_d) \right) (e_{j_1},\ldots,e_{j_d}). $ 

\begin{claim}\label{rank-universal}
	The collection of multi-linear forms thus obtained satisfies
	\[ 
	\prk_\bk \left( (P^l_{j_1,\ldots,j_d})_{l,j_1,\ldots,j_d} \right) = \prk_\bk (P_1,\ldots,P_n).
	 \]
\end{claim}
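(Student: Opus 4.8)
Throughout write $\vec{j}=(j_1,\dots,j_d)$, and for a matrix $A_i\in\mathcal{M}_{s_i\times t}(\bk)$ write $a_i^{(j)}:=A_ie_j$ for its $j$-th column; then the formula for $P^l_{\vec{j}}$ given above reads $P^l_{\vec{j}}(A_1,\dots,A_d)=P_l(a_1^{(j_1)},\dots,a_d^{(j_d)})$, so each $P^l_{\vec{j}}$ factors through the column projections. The plan is to prove the two inequalities separately, using only the elementary fact that partition rank does not increase under precomposition by a product of linear maps: if $Q=\sum_k R_k\cdot S_k$ is a partition-rank decomposition and $\phi_1,\dots,\phi_d$ are linear, then $Q\circ(\phi_1\times\cdots\times\phi_d)=\sum_k(R_k\circ\phi)(S_k\circ\phi)$ is again one, so $\prk_\bk\bigl(Q\circ(\phi_1\times\cdots\times\phi_d)\bigr)\le\prk_\bk(Q)$.

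For the inequality $\prk_\bk\bigl((P^l_{\vec{j}})_{l,\vec{j}}\bigr)\le\prk_\bk(P_1,\dots,P_n)=:r$ it suffices to produce one nonzero linear combination of the $P^l_{\vec{j}}$ of partition rank $\le r$. I would choose $0\ne a\in\bk^n$ with $\prk_\bk(\sum_l a_lP_l)=r$ and take the ``diagonal'' combination $\sum_l a_l P^l_{(1,\dots,1)}$; by the column formula this equals $\bigl(\sum_l a_lP_l\bigr)\circ(\psi_1\times\cdots\times\psi_d)$, where $\psi_i\colon\mathcal{M}_{s_i\times t}(\bk)\to V_i$ extracts the first column, so its partition rank is $\le r$. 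It is a legitimate entry in the minimum defining the collective rank of the $P^l_{\vec{j}}$ because its coefficient vector is $a\ne0$.

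For the reverse inequality I would take an arbitrary nonzero combination $Q=\sum_{l,\vec{j}}c^l_{\vec{j}}P^l_{\vec{j}}$, fix a pair $(l_0,\vec{j}_0)$ with $c^{l_0}_{\vec{j}_0}\ne0$, write $\vec{j}_0=(j_{0,1},\dots,j_{0,d})$, and precompose with the linear maps $\phi_i\colon V_i\to\mathcal{M}_{s_i\times t}(\bk)$ sending $v$ to the matrix whose $j_{0,i}$-th column is $v$ and whose other columns vanish. By multilinearity a zero column in any slot kills the whole term, so $P^l_{\vec{j}}\circ(\phi_1\times\cdots\times\phi_d)$ equals $P_l$ when $\vec{j}=\vec{j}_0$ and $0$ otherwise; hence $Q\circ(\phi_1\times\cdots\times\phi_d)=\sum_l c^l_{\vec{j}_0}P_l$, a nonzero linear combination of $P_1,\dots,P_n$ (its $l_0$-th coefficient is nonzero). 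Therefore $\prk_\bk(Q)\ge\prk_\bk(\sum_l c^l_{\vec{j}_0}P_l)\ge\prk_\bk(P_1,\dots,P_n)$, and taking the minimum over all nonzero $Q$ gives the claim.

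I do not expect a genuine obstacle: the only points to verify are that the two exhibited combinations have nonzero coefficient vectors (immediate from $a\ne0$, resp.\ from the choice of $(l_0,\vec{j}_0)$) and that the column-insertion and column-extraction maps are linear (clear). The subtlest decision is the choice of substitution $\phi_i$ in the second step. A more symmetric alternative --- ``every column of $\phi_i(v)$ is a scalar multiple of $v$'' --- collapses $Q$ to a combination $\sum_l\gamma_l(\mu)P_l$ whose coefficients are multilinear polynomials in the chosen scalars $\mu$, and then forces one to invoke the (true, but extra) lemma that a nonzero multilinear polynomial has a non-root over every field. The column-selection substitution above sidesteps this entirely and works uniformly over all $\bk$, including finite fields.
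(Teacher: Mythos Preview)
Your proof is correct and takes essentially the same approach as the paper's: the paper phrases your $\phi_i$-substitution as ``restrict to the subspace $U=\{A_i(k,j)=0:\forall i,k,\ \forall j\neq j_{0,i}\}$'' and handles the $\le$ direction by observing directly that each $P^l_{\vec j}$ is $P_l$ in relabelled variables (hence equal rank), which subsumes your column-extraction argument. The content is identical; only the language differs.
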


\begin{proof}
	For a multilinear form $ P=\sum_{k_1,\ldots,k_d} a_{k_1,\ldots,k_d} \prod_{i\in [d]} \bx_i(k_i) $ and indices $ j_1,\ldots,j_d $
	\[ 
	P_{j_1,\ldots,j_d}(A_1,\ldots,A_d) = \sum_{k_1,\ldots,k_d} a_{k_1,\ldots,k_d} \prod_{i\in [d]} A_i(k_i,j_i). 
	 \]
	Therefore, $ P_{j_1,\ldots,j_d}(A_1,\ldots, A_d) $ is just $ P $ with relabeled variables, meaning they have the same rank. So for any $ c_1,\ldots,c_n $ and fixed $ j_1,\ldots,j_d $ we have
	\[
	\prk_\bk (\sum_l c_lP^l_{j_1,\ldots,j_d}) = \prk_\bk (\sum_l c_lP_l).
	\]
	Now let $ P' = \sum_{l,j_1,\ldots,j_d } c^l_{j_1,\ldots,j_d} P^l_{j_1,\ldots,j_d} $ be a non-trivial linear combination with, say, $ c^l_{1,\ldots,1} \neq 0 $ for some $l.$ Restricting to the subspace
    \[
    U = \{A_i(k,j) = 0: \forall i,k\ \forall j\neq 1 \}
    \]
 gives $ P'\restriction_U = \sum_l  c^l_{1,\ldots,1} P^l_{1,\ldots,1} $ so
 \[
 \prk_\bk (P') \ge \prk_\bk (\sum_l  c^l_{1,\ldots,1} P^l_{1,\ldots,1}) = \prk_\bk (\sum_l  c^l_{1,\ldots,1} P_l) \ge \prk_\bk (P_1,\ldots,P_n). 
 \]
 
\end{proof}

The desired equalities $ P_l\circ (A_1\times\ldots\times A_d) = R_l $ are in fact $ nt^d $ equations of the form
\begin{equation}\label{universal-eq}
	  P^l_{j_1,\ldots,j_d}(A_1,\ldots, A_d) = r^l_{j_1,\ldots,j_d}
\end{equation}
 and we have just proved that the collection of multi-linear forms appearing has partition rank $ \prk_\bk (P_1,\ldots,P_n). $  Our goal in the next sections is to show that such a system of equations has a $ \bk $-solution.

\section{Proof of results for finite fields and for $ \mQ $}

\subsection{Proof for finite fields:}
 We start by defining bias.

\begin{definition}
	Let $ V $ be a finite dimensional vector space over a finite field $\bk$ and $ f:V\to\bk$ some function. The bias of $ f $ is 
	\[ 
	\textnormal{bias}(f) = \left| \mE_{x\in V} \chi(f(x)) \right| 
	 \]
	 where $ \chi:k\to\mC $ is some non-trivial character, and $\mE_{x\in X}$ denotes $\frac{1}{|X|}\sum_{x\in X}$ for a finite set $X.$
\end{definition}

The bias of $ f $ depends also on the choice of character $ \chi $ but this won't matter in what follows. The key to proving theorem \ref{universal} is the following theorem relating rank to bias for multilinear polynomials. The first part is due to Milicevic \cite{M19}, with a similar result proved independently by Janzer \cite{J20}, and the second to Cohen-Moshkovitz \cite{COMO21}.

\begin{theorem}\label{bias-rank}
	Let $\bk$ be a finite field and $ P:V_1\times\ldots\times V_d\to\bk$ a multilinear form. Assume either of the following conditions holds for some $ r \ge 1 $:
	\begin{enumerate}
		\item $\prk_\bk(P) \ge \alpha r^\beta,$ for constants $ \alpha(d) = 2^{d^{2^{O(d^2)}}} , \beta(d) = 2^{2^{O(d^2)}}. $ 
		\item $ |\bk| \ge F(r,d) $ and $ \prk_\bk(P) \ge 2^{d-1}r. $	
	\end{enumerate}
	Then $ \textnormal{bias}(P) \le |\bk|^{-r}. $
\end{theorem}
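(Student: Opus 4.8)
The inequality $\textnormal{bias}(P)\ge q^{-\prk_\bk(P)}$ (with $q:=|\bk|$) is elementary: in a partition-rank decomposition $P=\sum_{k\le\rho}R_k\cdot S_k$ one integrates out, one block of variables at a time, the blocks carried by $R_1,S_1,R_2,\dots$, and each step costs at most a factor $q^{-1}$ because the $\chi$-average of a multilinear form over its last free block is either $1$ or $0$ (the form is linear there). What is at stake is the reverse: small bias forces small partition rank. Two exact identities for multilinear $P$ drive the argument. First, since the $\chi$-average of a multilinear form is always a real number in $[0,1]$, writing $P_h(\bx_1,\dots,\bx_{d-1})=P(\bx_1,\dots,\bx_{d-1},h)$ for $h\in V_d$ gives the slice identity $\textnormal{bias}(P)=\mE_{h\in V_d}\textnormal{bias}(P_h)$. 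Second, integrating $\bx_1$ out first, $\textnormal{bias}(P)=|Z_P(\bk)|/q^{\dim(V_2\times\cdots\times V_d)}$, where $Z_P=\{(\bx_2,\dots,\bx_d):P(\cdot,\bx_2,\dots,\bx_d)\equiv 0\}$ is the variety introduced earlier, which satisfies $\textnormal{codim}\,Z_P\le\prk_\bk(P)$. The plan is to use the second identity for part (2) and the first for part (1).

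\textbf{Part (2), large fields.}
Put $N=\dim(V_2\times\cdots\times V_d)$ and $c=\textnormal{codim}\,Z_P$. By the point-count identity $\textnormal{bias}(P)=|Z_P(\bk)|/q^N$, so I need an upper bound on $|Z_P(\bk)|$. Since $Z_P$ is cut out by polynomials of degree $\le d-1$ (the coefficients of the linear-in-$\bx_1$ form $P$, which are $(d-1)$-multilinear in $\bx_2,\dots,\bx_d$), a generic-complete-intersection argument together with Bézout's inequality gives $\deg Z_P\le(d-1)^{c}$, and the standard point count $|X(\bk)|\le\deg(X)\,q^{\dim X}$ then yields $\textnormal{bias}(P)\le\bigl((d-1)/q\bigr)^{c}$. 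It remains to feed in the hypothesis: the Cohen--Moshkovitz comparison $\prk_\bk(P)\le 2^{d-1}c$ — the deep converse to $\textnormal{codim}\,Z_P\le\prk_\bk(P)$ — combined with $\prk_\bk(P)\ge 2^{d-1}r$ gives $c\ge r$, and, being careful about integrality at the boundary, $c>r$. Then $\bigl((d-1)/q\bigr)^{c}\le q^{-r}$ as soon as $q\ge F(r,d):=(d-1)^{r+1}$ — which is precisely the role of the field-size hypothesis, and note that since $c\mapsto\bigl((d-1)/q\bigr)^{c}$ is decreasing once $q>d-1$, the fact that $c$ is unbounded causes no trouble. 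The point I would worry about most here is the boundary bookkeeping that must deliver $c$ strictly above $r$; this is where one invokes Cohen--Moshkovitz in its sharp form.

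\textbf{Part (1), all finite fields.}
Here point counting is unavailable and I would induct on $d$. The base case $d=2$ is an equality: for a bilinear $P(\bx_1,\bx_2)$ with matrix $M$ one has $\prk_\bk(P)=\rk M$ and $\textnormal{bias}(P)=q^{-\rk M}$, so the claim holds with constants $1$. For the inductive step, suppose $\textnormal{bias}(P)>q^{-r}$; by the slice identity a fraction $\gtrsim q^{-r}$ of the slices $P_h$ satisfy $\textnormal{bias}(P_h)>q^{-r-1}$, hence by induction $\prk_\bk(P_h)$ is below a bound depending only on $d$ and $r$. The obstruction — and the technical heart of the Milicevic/Janzer argument being invoked — is that reassembling the low-rank structures of individual slices into a decomposition of $P$ naively costs a factor $q^{r}$, which is exponential, not polynomial, in $r$. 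What rescues polynomiality is a regularity/inverse theorem for the partition rank of multilinear forms: from $\textnormal{bias}(P)>q^{-r}$ one produces an affine subspace of $V_d$ of codimension polynomial in $r$ over which all slices $P_h$ are governed by a \emph{single} family, of size polynomial in $r$, of lower-order multilinear forms; this common structure then lifts to a partition-rank decomposition of $P$, with one extra factor accounting for the linear dependence on $h\in V_d$. Iterating the step $d-2$ times, the per-step polynomial losses compound into constants that are towers in $d$ — whence $\alpha(d)=2^{d^{2^{O(d^2)}}}$ and $\beta(d)=2^{2^{O(d^2)}}$ — while the dependence on $r$ stays polynomial because each individual step is. The single hardest point in the whole theorem is this polynomial-loss regularization; everything else is bookkeeping or a black-box appeal to Cohen--Moshkovitz.
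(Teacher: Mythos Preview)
The paper does not prove this theorem at all: it is quoted from the literature (part (1) is due to Mili\'cevi\'c, with an independent result by Janzer; part (2) is due to Cohen--Moshkovitz) and then used as a black box in the Fourier-analytic counting argument that follows. There is no proof in the paper to compare your proposal against.

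As a sketch of those external results, your outline of part (1) is broadly faithful to the Mili\'cevi\'c/Janzer strategy: you correctly isolate the slicing identity, the naive exponential blow-up when reassembling slice decompositions, and the polynomial-loss regularity/inverse step as the decisive ingredient. For part (2), however, there is a genuine gap. From the hypothesis $\prk_\bk(P)\ge 2^{d-1}r$ and the inequality $\prk_\bk(P)\le 2^{d-1}c$ (which, incidentally, is the Schmidt / Kazhdan--Lampert--Polishchuk bound recorded as Theorem~\ref{rank-sing} in this paper, not a Cohen--Moshkovitz statement) you obtain only $c\ge r$, not $c>r$; at $c=r$ your B\'ezout/point-count bound gives $\textnormal{bias}(P)\le\bigl((d-1)/q\bigr)^{r}$, which cannot be made $\le q^{-r}$ for any $d\ge 3$, no matter how large $q$ is. Your appeal to ``integrality at the boundary'' does not help, since equality in $2^{d-1}c\ge\prk_\bk(P)\ge 2^{d-1}r$ is not excluded. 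Cohen--Moshkovitz do not proceed via this crude B\'ezout route; their argument compares partition rank directly with analytic rank (equivalently with $-\log_q\textnormal{bias}(P)$) via geometric rank in a way that avoids the boundary loss, and that sharper comparison is precisely what the large-field hypothesis $|\bk|\ge F(r,d)$ is buying.
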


The proof of theorem \ref{universal} for finite fields, and hence of theorems \ref{main-ml} and \ref{main-multiple}, now follows by a routine Fourier-analytic argument.

\begin{proof}
We will show that theorem \ref{universal} holds for finite fields with \linebreak $C = 2^{d^{2^{O(d^2)}}}, D = 2^{2^{O(d^2)}}$ or with $C=2^{d-1}, D=1$ under the additional assumption that $|\bk| \ge F(nt^d,d).$
Recall that we want to show the existence of a $ \bk$-solution to the system of $ nt^d $ multilinear equations (\ref{universal-eq})
\[ 
P^l_{j_1,\ldots,j_d}(A_1,\ldots, A_d) = r^l_{j_1,\ldots,j_d}.
 \]
We showed in claim \ref{rank-universal} that 
\[ 
\prk_\bk \left( P^l_{j_1,\ldots,j_d} \right)_{l,j_1,\ldots,j_d} = \prk_\bk(P_1,\ldots,P_n).  
 \]
Applying theorem $ \ref{bias-rank}, $ we get that any nontrivial linear combination satisfies 
\[ 
\textnormal{bias} \left(  \sum_{l,j_1,\ldots,j_d}\alpha^l_{j_1,\ldots,j_d} P^l_{j_1,\ldots,j_d} \right)  \le |\bk|^{-nt^d}.
 \] 
We can therefore estimate the number of solutions
	\begin{align*}
		&\mE_{A_1,\ldots,A_d} \prod_{l,j_1,\ldots,j_d} 1_{P^l_{j_1,\ldots,j_d} (A_1,\ldots,A_d) = r^l_{j_1,\ldots,j_d}} =\\
		 &\mE_{\alpha^l_{j_1,\ldots,j_d}} \mE_{A_1,\ldots,A_d} \chi (\sum_{l,j_1,\ldots,j_d}\alpha^l_{j_1,\ldots,j_d} (P^l_{j_1,\ldots,j_d} (A_1,\ldots,A_d) - r^l_{j_1,\ldots,j_d})) = \\
		 &|\bk|^{-nt^d} \left( 1+ \sum_{\alpha^l_{j_1,\ldots,j_d} \neq 0} \mE_{A_1,\ldots,A_d} \chi (\sum_{l,j_1,\ldots,j_d}\alpha^l_{j_1,\ldots,j_d} (P^l_{j_1,\ldots,j_d} (A_1,\ldots,A_d) - r^l_{j_1,\ldots,j_d}))\right) \ge \\
		 & |\bk|^{-nt^d} \left( 1 - \sum_{\alpha^l_{j_1,\ldots,j_d}\neq 0} \textnormal{bias}( \sum_{l,j_1,\ldots,j_d}\alpha^l_{j_1,\ldots,j_d} P^l_{j_1,\ldots,j_d} ) \right) > 0.
	\end{align*}
 As we saw in the previous section, this proves theorem \ref{main-ml} with the constants
 \begin{enumerate}
     \item $ \tilde{A} = C = 2^{d^{2^{O(d^2)}}},\ \tilde{B} = dD = 2^{2^{O(d^2)}}.$
     \item $ \tilde{A} = C = 2^{d-1},\ \tilde{B} = dD = d$ if $|\bk| \ge F(\prk_\bk (P_1,\ldots,P_n),n,d),$
 \end{enumerate}
and theorem \ref{main-multiple} with the constants
\begin{enumerate}
    \item $A = \tilde{A} \binom{d}{\lfloor d/2 \rfloor}^{\tilde{B}} = 2^{2^{O(d^2)}},\   
    B = \Tilde{B} = 2^{2^{O(d^2)}}.$
    \item $A = \tilde{A} \binom{d}{\lfloor d/2 \rfloor}^{\tilde{B}} = 2^{d-1}\binom{d}{\lfloor d/2 \rfloor}^d,\   
    B = \Tilde{B} = d$ if $|\bk| \ge F(\rk_\bk (Q_1,\ldots,Q_n),n,d).$
\end{enumerate}
\end{proof}

\subsection{Proof for $\mQ$:} Now we turn to proving theorem \ref{main-ml} for the rationals. We prove it in the case $ n=1. $ The case of general $ n $ follows by the argument given for perfect fields in the previous section. Let $ P: \mQ^{s_1}\times\ldots\times \mQ^{s_d}\to \mQ $ be a multi-linear form, and write $ s=s_1+\ldots+s_d. $ Assume without loss of generality that $ P $ has integer coefficients. We will show that for all but finitely many primes $p,$ the reduction $P\mod p$ has partition rank which is bounded from below in terms of $\prk_\mQ(P).$ 

\begin{proposition}\label{rk-to-finite}
    Let $ P: \mQ^{s_1}\times\ldots\times \mQ^{s_d}\to \mQ $ be a multi-linear form with integer coefficients. Then for all sufficiently large primes $p$ we have
    \[
        \prk_\mQ(P) \le 2^{d-1} d\cdot  \prk_{\mF_p}(P\mod p). 
    \]
\end{proposition}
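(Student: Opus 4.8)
The plan is to show that if $P \bmod p$ has partition rank $r$ over $\mathbb{F}_p$ for infinitely many $p$, then $P$ itself has partition rank at most $2^{d-1}dr$ over $\mathbb{Q}$. Equivalently, writing $\bar r := \prk_{\mathbb{Q}}(P)$, I want to produce, for all sufficiently large $p$, a lower bound $\prk_{\mathbb{F}_p}(P \bmod p) \ge \bar r / (2^{d-1}d)$. The natural bridge is the geometric quantity from the earlier claim: for a multilinear form $P$ on $V_1 \times \cdots \times V_d$, $\operatorname{codim} Z_P \le \prk(P)$, where $Z_P = \{(v_2,\ldots,v_d) : P(\cdot, v_2, \ldots, v_d) \equiv 0\}$. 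One also has a reverse-type bound: the partition rank of a multilinear form is controlled by a polynomial expression in the codimensions of the varieties $Z_P$ under all $d$ choices of a distinguished coordinate — more precisely, partition rank $1$ means exactly $P$ factors through one such $Z$, and a bound of the shape $\prk(P) \le 2^{d-1}d \cdot \max_i \operatorname{codim} Z_P^{(i)}$ (or something in that spirit) should hold by induction on $d$, peeling off one linear factor at a time and using that in the factorization $P = \sum R_k S_k$ the $R_k$ cut out a subvariety of $Z_P^{(1)}$. This is the (field-independent) combinatorial-geometric ingredient; I would state it as a lemma with the constant $2^{d-1}d$.

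With such a lemma in hand over both $\mathbb{Q}$ and $\mathbb{F}_p$, the strategy is: (1) Upper-bound $\bar r = \prk_{\mathbb{Q}}(P)$ by $2^{d-1}d \cdot \operatorname{codim}_{\bar{\mathbb{Q}}} Z_P$ — actually I should work over $\bar{\mathbb{Q}}$, noting partition rank over $\mathbb{Q}$ of a multilinear form equals partition rank over $\bar{\mathbb{Q}}$ for $n=1$ is \emph{not} automatic, so I must be careful; the cleaner route is to compare $\prk_{\mathbb{Q}}(P)$ directly to the codimension of the $\mathbb{Q}$-variety $Z_P$, which is a purely scheme-theoretic quantity that behaves well under reduction mod $p$. (2) The variety $Z_P \subseteq \mathbb{A}^{s_2 + \cdots + s_d}$ is cut out by the $s_1$ bilinear-in-the-rest equations given by the coefficients of $P(e_j, -, \ldots, -)$; its codimension is read off from its defining ideal, and for all but finitely many $p$ the reduction mod $p$ of a fixed scheme has the same dimension (standard: the locus of primes where fiber dimension jumps is closed and proper, being cut out by whether certain minors/Gröbner data degenerate). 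So $\operatorname{codim}_{\mathbb{F}_p} (Z_P \bmod p) = \operatorname{codim}_{\mathbb{Q}} Z_P$ for $p \gg 0$, and likewise for the other $d-1$ coordinate choices. (3) Then $\bar r \le 2^{d-1}d \cdot \operatorname{codim}_{\mathbb{Q}} Z_P^{(i_0)}$ for the worst coordinate $i_0$, while $\prk_{\mathbb{F}_p}(P \bmod p) \ge \operatorname{codim}_{\mathbb{F}_p} Z_{P \bmod p}^{(i_0)} = \operatorname{codim}_{\mathbb{Q}} Z_P^{(i_0)}$, giving the claimed inequality.

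The main obstacle I anticipate is step (2)'s combinatorial-geometric lemma bounding partition rank from above by the maximal $Z$-codimension with the explicit constant $2^{d-1}d$: the induction has to track how factoring a summand $R_k \cdot S_k$ with, say, $\deg R_k$ coordinates on one side interacts with the ambient codimension bookkeeping, and the factor $2^{d-1}$ presumably arises from summing a geometric series over the $2^{d-1}$ nontrivial bipartitions one might recurse through while the extra $d$ is a union-bound over which coordinate plays the distinguished role. A secondary, more technical point is making the "all but finitely many primes" uniform across the finitely many varieties $Z_P^{(i)}$ simultaneously — but since there are only $d$ of them and each excludes a finite set of primes, their union is still finite, so this is routine. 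I would also need to double-check the direction of the inequalities: we only ever \emph{lose} a factor going from codimension to partition rank via the lemma over $\mathbb{Q}$, and \emph{gain} (in the favorable direction) going from partition rank to codimension over $\mathbb{F}_p$ via the basic claim, which is exactly what the asymmetry of the desired bound $\prk_{\mathbb{Q}}(P) \le 2^{d-1}d \cdot \prk_{\mathbb{F}_p}(P \bmod p)$ reflects.
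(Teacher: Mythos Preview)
There is a genuine gap at your key lemma: the claimed bound $\prk_{\mQ}(P) \le 2^{d-1}d \cdot \textnormal{codim}_{\bar{\mQ}} Z_P^{(i_0)}$ with the \emph{geometric} (scheme-theoretic) codimension on the right. Combined with the easy inequality $\textnormal{codim}_{\bar{\mQ}} Z_P \le \prk_{\bar{\mQ}}(P)$, this would immediately give $\prk_{\mQ}(P) \le 2^{d-1}d \cdot \prk_{\bar{\mQ}}(P)$ --- a linear bound, which is precisely the conjecture stated after Theorem~\ref{main-multiple} and strictly stronger than anything the paper establishes. Your inductive sketch in fact runs the wrong way: a decomposition $P = \sum_k R_k S_k$ exhibits $\{R_k = 0\ \forall k\} \subset Z_P$ and hence yields $\textnormal{codim}\, Z_P \le \prk(P)$, not the reverse. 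The reverse inequality that \emph{is} available (Theorem~\ref{rank-sing}, from \cite{KP21}) bounds $\prk_\bk(P)$ by $2^{d-1}$ times the codimension $g_\bk(P)$ of the Zariski closure of the $\bk$-\emph{rational points} $Z_P(\bk)$; a priori that closure can be much smaller than $Z_P$, so $g_\mQ(P)$ can be much larger than the geometric codimension, and your reduction-mod-$p$ comparison of scheme-theoretic codimensions does not control it.

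The paper closes exactly this gap by manufacturing many genuine $\mQ$-points of $Z_P$ out of the mod-$p$ data. From $\prk_{\mF_p}(P \bmod p) = r$ one gets at least $p^{s-r}$ points of $Z_P \bmod p$ in $[0,p)^s$ (via the easy bias lower bound); a scaling/pigeonhole lemma (Lemma~\ref{scaling}) then finds $\gg \lceil p^{1/d}\rceil^{s-dr}$ such points already in a box of side $\lceil p^{1/d}\rceil$. For such small $\bx$ one has $|P(\bx,e_i)| \ll p^{(d-1)/d} < p$, so $P(\bx,e_i) \equiv 0 \pmod p$ forces $P(\bx,e_i) = 0$ in $\mZ$. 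These are now honest integer points of $Z_P(\mQ)$, and their abundance forces $g_\mQ(P) \le dr$ via effective Noether normalization (Corollary~\ref{points-to-rank}); only then does Theorem~\ref{rank-sing} yield $\prk_\mQ(P) \le 2^{d-1}\cdot dr$. The lifting from mod-$p$ solutions to integer solutions via the small-box argument is the idea your outline is missing.
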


We now complete the proof of theorem \ref{main-ml} and theorem \ref{main-multiple} for $\mQ$ assuming proposition \ref{rk-to-finite}.

\begin{proof} 
    Suppose $\prk_\mQ (P) \ge 4^{d-1}d\cdot \bar{r}^d.$ By proposition \ref{rk-to-finite} we get that for sufficiently large primes $p,$ we have $\prk_{\mF_p}(P\mod p) \ge 2^{d-1}\cdot \bar{r}^d.$ By theorem \ref{main-ml} for sufficiently large finite fields, we conclude that for $p$ sufficiently large we have $ \prk_{\bar{\mF_p}}(P \mod p) \ge \bar{r}. $ By a model-theoretic result (corollary 2.2.10 in \cite{Mod}), we deduce that $\prk_{\bar{\mQ}}(P) \ge \bar{r}.$ This completes the proof of theorem \ref{main-ml} with
    \[
    \tilde{A} = 4^{d-1}d, \tilde{B} = d.
    \]
    By the argument given in the last section this proves theorem \ref{main-multiple} with 
    \[
    A = \tilde{A} \binom{d}{\lfloor d/2 \rfloor}^{\tilde{B}} = 4^{d-1}d\binom{d}{\lfloor d/2 \rfloor}^d , \ B = \tilde{B} =d.
    \]
\end{proof}

In order to prove proposition \ref{rk-to-finite}, we will need a simple scaling result for solutions to systems of multilinear equations. 

\begin{lemma}\label{scaling}
    Let $\bar Q = Q_1,\ldots,Q_n:\mZ^{s_1}\times\ldots\times \mZ^{s_d}\to G$ be a collection of multilinear maps where $G$ is some abelian group. For a positive integer $R$ let 
    \begin{align*}
        N_R(\bar Q) &= |\{x\in \mZ^s: Q_i(x) = 0\ \forall i\in[n], 0\le x_j< R\  \forall j\in[s]\}| \\
        N'_R(\bar Q) &= |\{x\in \mZ^s: Q_i(x) = 0\ \forall i\in[n], -R< x_j< R\  \forall j\in[s]\}|.
    \end{align*}
    Then for any positive integer $L$ we have
    \[
        N_{LR}(\bar Q) \le L^s N'_R(\bar Q),
    \]
    where $s=s_1+\ldots+s_d.$
\end{lemma}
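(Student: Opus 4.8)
The idea is to pass from the big box to the small box one coordinate-block at a time, losing a factor of $L^{s_k}$ at the $k$-th block. The mechanism is that a multilinear map, once all but one of its arguments are frozen, becomes a group homomorphism, so its zero locus in the remaining argument is a \emph{subgroup} of $\mZ^{s_k}$ --- and a subgroup of $\mZ^m$ contains at most $L^m$ times as many lattice points in a box of side $LR$ as in a box of side $2R$ centred at the origin.

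Write $\mZ^s=\mZ^{s_1}\times\cdots\times\mZ^{s_d}$ and decompose $x=(x^{(1)},\ldots,x^{(d)})$ accordingly. For $0\le k\le d$ set
\[
 M_k:=\bigl|\{x\in\mZ^s:\ Q_i(x)=0\ \forall i\in[n],\ x^{(j)}\in(-R,R)^{s_j}\ (j\le k),\ x^{(j)}\in[0,LR)^{s_j}\ (j>k)\}\bigr|,
\]
so that $M_0=N_{LR}(\bar Q)$ and $M_d=N'_R(\bar Q)$. I will prove $M_{k-1}\le L^{s_k}M_k$ for each $1\le k\le d$; multiplying these $d$ inequalities gives $N_{LR}(\bar Q)=M_0\le L^{s_1+\cdots+s_d}M_d=L^sN'_R(\bar Q)$.

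To prove $M_{k-1}\le L^{s_k}M_k$, fix all blocks except the $k$-th and denote the resulting tuple by $\mathbf z$; in both $M_{k-1}$ and $M_k$ this $\mathbf z$ ranges over the same finite set (only the range of block $k$ differs between the two). Since each $Q_i$ is multilinear, the map sending $w\in\mZ^{s_k}$ to the tuple of values $Q_i$ with block $k$ equal to $w$ and the other blocks equal to $\mathbf z$ is a homomorphism $\mZ^{s_k}\to G^n$; let $H_{\mathbf z}\le\mZ^{s_k}$ be its kernel. Then the contribution of $\mathbf z$ to $M_{k-1}$ is $|H_{\mathbf z}\cap[0,LR)^{s_k}|$ and its contribution to $M_k$ is $|H_{\mathbf z}\cap(-R,R)^{s_k}|$, so it suffices to show that for every subgroup $H\le\mZ^m$ one has $|H\cap[0,LR)^m|\le L^m\,|H\cap(-R,R)^m|$. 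For this, partition $[0,LR)^m=\bigsqcup_{v\in\{0,\ldots,L-1\}^m}B_v$ with $B_v:=\prod_{j=1}^m[v_jR,(v_j+1)R)$; for each $v$ with $H\cap B_v\neq\emptyset$ pick a point $p_v\in H\cap B_v$, and observe that $x\mapsto x-p_v$ injects $H\cap B_v$ into $H\cap(-R,R)^m$ --- the image lies in $H$ because $H$ is a subgroup, and in $(-R,R)^m$ because the difference of two elements of the cube $B_v$ has each coordinate in the open interval $(-R,R)$. Summing over the (at most) $L^m$ boxes $B_v$ and then over $\mathbf z$ yields $M_{k-1}\le L^{s_k}M_k$.

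The only point requiring care is the bookkeeping: checking that the fibres over $\mathbf z$ genuinely match up between $M_{k-1}$ and $M_k$, and that multilinearity is invoked precisely as additivity in each argument, which is exactly what makes each $H_{\mathbf z}$ a subgroup (it contains $0$) rather than merely a coset. I do not expect any real obstacle --- the lemma is just the elementary subgroup-in-a-box estimate packaged and applied block by block.
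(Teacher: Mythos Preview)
Your proof is correct and is essentially the paper's argument, just organized as a telescoping product $M_0\le L^{s_1}M_1\le\cdots\le L^sM_d$ rather than as an induction on $d$; the paper's base case $d=1$ (``subtract a single solution'') is exactly your subgroup-in-a-box estimate, and its inductive step with the Fubini swap amounts to shrinking blocks $2,\ldots,d$ first and then block $1$, whereas you shrink them in order $1,\ldots,d$. Your phrasing via the kernel $H_{\mathbf z}\le\mZ^{s_k}$ is a clean way to package the same translation trick.
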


\begin{proof} 
    By induction on $d.$ For the base case $d=1,$ first note that for any $c_i\in G$ the number of solutions to $Q_i(x) = c_i$ in $[0,R)^s$ is bounded above by $N'_R(\bar Q).$ This follows by substracting a single solution from every one of the others. Writing $\bar{R} = (R,\ldots,R),$ we have
    \begin{align*}
        N_{LR}(\bar Q) &= \sum_{0\le x_i<LR} 1_{Q_i(x) = 0} = \sum_{\omega\in\{0,1,\ldots,L-1\}^s} \sum_{0\le x_i<R} 1_{Q_i(x+\omega\cdot \bar R) = 0} \\
        &= \sum_{\omega\in\{0,1,\ldots,L-1\}^s} \sum_{0\le x_i<R} 1_{Q_i(x) = -Q_i(\omega\cdot \bar R)} \le  \sum_{\omega\in\{0,1,\ldots,L-1\}^s} N'_R(\bar Q) = L^s N'_R(\bar Q).
    \end{align*}
    For general $d,$ we have
    \begin{align*}
         N_{LR}(\bar Q) &= \sum_{0\le \bx_1(i)<LR} N_{LR} (\bar Q(\bx_1,\cdot)) \le \sum_{0\le \bx_1(i)<LR} L^{s_2+\ldots+s_d}N'_R (\bar Q(\bx_1,\cdot)) \\
         &= L^{s_2+\ldots+s_d} \sum_{\|\bx_2\|_\infty,\cdots,\|\bx_d\|_\infty<R}  N_{LR}(\bar Q(\cdot,\bx_2,\cdots,\bx_d)) \\
         &\le L^s \sum_{\|\bx_2\|_\infty,\cdots,\|\bx_d\|_\infty<R}  N'_R(\bar Q(\cdot,\bx_2,\cdots,\bx_d)) = L^s N'_R(\bar Q),
    \end{align*}
   where the first inequality follows from the inductive hypothesis and the second one follows from the base case $d=1.$
\end{proof}

\begin{proof}[Proof of proposition \ref{rk-to-finite}]
    Let $s =s_1+\ldots+s_{d-1}.$ Suppose $p$ is a prime and \linebreak
    $\prk_{\mF_p} (P\mod p) = r.$ By the inequality $ \textnormal{bias}(P \mod p) \ge p^{-r}, $ proved in \cite{KZ17}, we have
    \[
        |\{\bx\in \mZ^s:P(\bx_1,\ldots,\bx_{d-1},\cdot) = 0 \mod p\}|\ge p^{s-r}.
    \]
    Given $0<\eta<1 ,$ we can apply lemma \ref{scaling} to the collection of multilinear maps
    \[
    \bar Q = Q_1,\ldots,Q_{s_d}: \mZ^{s_1}\times\ldots\times\mZ^{s_{d-1}}\to\mF_p,
    \]
    where
    \[
        Q_i(\bx) = P(\bx_1,\ldots,\bx_{d-1},e_i) \mod p,
    \]
     to get
     \[
         N_p(\bar Q) \le \lceil p^{1-\eta} \rceil ^s N'_{\lceil p^\eta \rceil}(\bar Q) \ll_s p^{s(1-\eta)}N'_{\lceil p^\eta \rceil}(\bar Q),
     \]
     so
     \[
     N'_{\lceil p^\eta \rceil}(\bar Q) \gg_s p^{\eta (s-r/\eta)} \gg_{s,\eta,r} \lceil p^\eta \rceil^{s-r/\eta}.
     \]
    For $\eta = \frac{1}{d}$ and $\|\bx\|_\infty < \lceil p^\eta \rceil$ we have $|P(\bx,e_i)| \ll p^{\frac{d-1}{d}}$ so for sufficiently large $p,$
    \[
        P(\bx,e_i) = 0\mod p \implies P(\bx,e_i) = 0.
    \]
    Therefore, 
    \[
    |\{\bx\in\mZ^s:P(\bx,\cdot) = 0, \|\bx\|_\infty < \lceil p^{1/d}\rceil \}| \gg_{s,r} \lceil p^{1/d} \rceil^{s-dr}. 
    \]
    By corollary \ref{points-to-rank}, which will be proved in the next section, this implies \linebreak
    $\prk_\mQ (P) \le 2^{d-1} d r.$
    
\end{proof}

\section{Proof for number fields and finite separable extensions of $\mF_q(t)$}

The obstruction to the solution to a system of equations given by multi-linear forms is the Birch singular locus.
\begin{definition}
	Let $ P:V_1\times\ldots\times V_d\to\bk $ be a multilinear form of degree $ d>1. $ The Birch singular locus is the subvariety
	\[ 
	V_1\times\ldots\times V_{d-1} \supset Z_P = \{(\bx_1,\ldots,\bx_{d-1}): P(\bx_1,\ldots,\bx_{d-1},\cdot) =0\}. 
	\]
	For a collection $ \mathcal{P} = (P_1,\ldots,P_n) $ the Birch singular locus is
	\[ 
	Z_{\mathcal{P}} = \{(\bx_1,\ldots,\bx_{d-1}): P_1(\bx_1,\ldots,\bx_{d-1},\cdot),\ldots,P_n(\bx_1,\ldots,\bx_{d-1},\cdot) \textnormal{ are linearly dependent}\}.
	\] 
\end{definition}

Kazhdan, Polishchuk and the first author \cite{KP21} extended a result of Schmidt \cite{SCH85} and showed a relationship between the Birch singular locus and the Schmidt rank for a multilinear form $ P $ over a general field $ \bk.$ 

\begin{theorem}\label{rank-sing}
    Let $ g_\bk(P) $ denote the codimension in $ V_1\times\ldots\times V_{d-1} $ of the Zariski closure of $ Z_P(\bk). $ Then $ \prk_\bk (P) \le 2^{d-1}\cdot g_\bk(P).$
\end{theorem}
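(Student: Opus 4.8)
The plan is to prove the inequality by induction on $d$. The base case $d=2$ is immediate: for a bilinear form $P\colon V_1\times V_2\to\bk$ the set $Z_P\subseteq V_1$ is the kernel of the associated map $V_1\to V_2^{\ast}$, a $\bk$-rational linear subspace, so $Z_P(\bk)$ is Zariski dense in $Z_P$ and $g_\bk(P)=\textnormal{codim}_{V_1}Z_P$ equals the rank of $P$ as a matrix over $\bk$, which is exactly $\prk_\bk(P)$; hence $\prk_\bk(P)=g_\bk(P)\le 2g_\bk(P)$. (Note $(0,\ldots,0)\in Z_P(\bk)$ always, so $g_\bk$ is always finite and the statement is never vacuous.)

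For the inductive step I would assume the bound for $(d-1)$-linear forms and decompose $P$ along the last coordinate: $P(\bx_1,\ldots,\bx_d)=\sum_{k=1}^{s_d}Q_k(\bx_1,\ldots,\bx_{d-1})\,\bx_d(k)$, with each $Q_k$ a $(d-1)$-linear form on $W:=V_1\times\cdots\times V_{d-1}$. Quotienting $V_d$ by $\{\bx_d:\sum_k\bx_d(k)Q_k\equiv 0\}$ changes neither $\prk_\bk(P)$ nor $Z_P$ (the discarded slices are redundant), so we may assume $Q_1,\ldots,Q_{s_d}$ are linearly independent; then $Z_P=\bigcap_k\{Q_k=0\}$ and trivially $\prk_\bk(P)\le s_d$. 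If $s_d\le 2^{d-1}g_\bk(P)$ we are done, so we may assume $\dim_\bk\langle Q_1,\ldots,Q_{s_d}\rangle=s_d$ is much larger than $g:=g_\bk(P)$, the codimension in $W$ of the common $\bk$-zero locus of the span $\mathcal Q$ of the $Q_k$. The heart of the argument is then a structural dichotomy for such a space: so much common vanishing in $\mathcal Q$ forces it to contain a nonzero form $Q$ of small partition rank together with a codimension-one subspace $\mathcal Q'\subset\mathcal Q$ whose common $\bk$-zero locus still has codimension $\le g$. Granting this, one checks $g_\bk(Q)\le g$, so the inductive hypothesis in degree $d-1$ gives $\prk_\bk(Q)\le 2^{d-2}g$; peeling $Q$ off costs $\le 1+\prk_\bk(Q)$ in partition rank, and running this recursion down $\dim\mathcal Q$ yields $\prk_\bk(P)\le 2^{d-1}g$ after the accounting — the single extra factor of $2$ over the degree-$(d-1)$ bound being the price of reassembling the $(d-1)$-linear slices back into the $d$-linear form $P$ (equivalently, of the splitting $I\sqcup J$ implicit in each partition-rank term).

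The step I expect to be the main obstacle is precisely this structural dichotomy. An individual slice of $P$ can be far less singular than $P$ itself — a generic element of $\mathcal Q$ may even have full partition rank — so one cannot take an arbitrary $Q\in\mathcal Q$; it has to be drawn from a distinguished subspace of $\mathcal Q$ obtained by a compression/regularization of the family $\{Q_k\}$ governed by the codimension $g$, and one must simultaneously guarantee that removing $Q$ does not make the common zero locus jump in codimension, all while remaining over $\bk$. A further, more technical point is that $g_\bk$ is measured on the Zariski closure of the $\bk$-points, which over a small finite field can be sparse; since the whole induction is carried out over $\bk$ this should not cause a conceptual difficulty, but it forbids passing to $\bar\bk$ and may require a direct point-counting input at the bottom of the recursion.
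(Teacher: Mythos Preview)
The paper does not actually prove this theorem: it is quoted from \cite{KP21} (extending Schmidt \cite{SCH85}) and used as a black box. So there is no in-paper argument to compare against; I can only evaluate your sketch on its own terms.

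Your base case $d=2$ is fine. The inductive step, however, is not a proof but a wish list. You yourself flag the ``structural dichotomy'' --- that the span $\mathcal Q$ of the slices $Q_k$ must contain a nonzero $Q$ with $g_\bk(Q)\le g$ --- as ``the main obstacle'', and indeed this is essentially the whole content of the theorem. You give no mechanism for producing such a $Q$; ``so much common vanishing forces a low-rank element'' is an intuition, not an argument, and the claim is far from obvious: one can have a two-dimensional pencil of bilinear forms on $\bk^n\times\bk^n$ (e.g.\ $\mathrm{span}\{I,J\}$ with $J^2=-I$ over $\mR$) in which every nonzero member has full rank $n$, while the common zero locus of the pencil has codimension only $2$. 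Your escape clause ``if $s_d\le 2^{d-1}g$ we are done'' happens to cover this toy example, but it shows that individual slices can be arbitrarily more nonsingular than $P$, so the existence of a low-rank $Q$ needs a real proof tied to the hypothesis.

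Separately, the bookkeeping in your recursion does not add up. ``Peeling off $Q$'' in the natural sense means writing $P=Q\cdot\ell+P'$ with $\ell$ linear in $\bx_d$, which costs exactly $1$ in partition rank (since $Q\cdot\ell$ has partition rank $1$), not $1+\prk_\bk(Q)$; the inductive bound $\prk_\bk(Q)\le 2^{d-2}g$ is never actually used. Running that recursion ``down $\dim\mathcal Q$'' just reproduces the trivial bound $\prk_\bk(P)\le s_d$. If instead you meant each step to cost $1+\prk_\bk(Q)\le 1+2^{d-2}g$, then after $s_d-2^{d-1}g$ steps the total far exceeds $2^{d-1}g$. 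Either way the doubling that should produce the factor $2^{d-1}$ is not visible in your scheme. In the arguments of Schmidt and of \cite{KP21}, the induction is organised differently: one slices in one of the variables $\bx_1,\ldots,\bx_{d-1}$ (the ones defining $Z_P$), not in $\bx_d$, and the factor of $2$ per degree arises from a genuine splitting of $P$ into two pieces each controlled by the codimension $g$, not from peeling one slice at a time.
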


By adapting an argument that Schmidt gave for $ \mQ $ in \cite{SCH85}, this extends to collections of multilinear forms over some fields. 

\begin{proposition}\label{rank-sing-families}
	Let $ \bk $ be a number field or a finite separable extension of $ \mF_q(t), $ and
	\[ 
	\mathcal{P} = (P_1,\ldots,P_n):\bk^{s_1}\times\ldots\times \bk^{s_d}\to\bk
	\]
	a collection of multilinear forms. Suppose for some $c_1,\ldots,c_n\in\bk$ the system of equations $(P_i = c_i)$ has no solution in $\bk^{s_1}\times\ldots\times \bk^{s_d}.$ Then 
 $$ \prk_\bk(\mathcal{P}) \le 2^{d-1}(d-1)n^2.$$ 
\end{proposition}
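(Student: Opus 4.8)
The plan is to reduce the statement about a collection $\mathcal{P} = (P_1,\ldots,P_n)$ to the single-form case (Theorem \ref{rank-sing}) by exhibiting a single multilinear form whose Birch singular locus is large whenever the system $(P_i = c_i)$ is unsolvable. The natural candidate is an ``affine-to-linear'' combination: introduce $d-1$ new variables $y_1,\ldots,y_{d-1}$ (or, more precisely, think of a new multilinear form in slightly enlarged spaces) that encodes the inhomogeneous constants $c_i$. Concretely, one adds one extra dimension to each of $V_1,\ldots,V_{d-1}$, and considers the homogeneous multilinear form
\[
\widehat{P}(\bx_1,\ldots,\bx_d) = \sum_{i=1}^n \lambda_i\bigl(P_i(\pi_1\bx_1,\ldots,\pi_d\bx_d) - c_i\,\langle \bx_1, e\rangle\cdots\langle\bx_{d-1},e\rangle\, \langle\bx_d,\cdot\rangle\bigr),
\]
or an appropriate variant, where $\lambda = (\lambda_i)$ ranges over $\bk^n$; the point is that a $\bk$-point of the Birch singular locus of a generic such combination that has the extra coordinates set to $1$ would produce a $\bk$-solution to $P_i = c_i$ for all $i$. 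Since no such solution exists by hypothesis, the $\bk$-points of the relevant singular loci must be confined to a lower-dimensional ``boundary'' stratum, forcing the codimension $g_\bk$ to be small.

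The key steps, in order: \textbf{(1)} Carefully set up the homogenization so that the fibre over the ``affine chart'' (extra coordinates $=1$) of the Birch singular locus of a linear combination $\sum_i\lambda_i(\cdots)$ corresponds exactly to near-solutions of $\sum_i\lambda_i P_i = \sum_i\lambda_i c_i$. \textbf{(2)} Run Schmidt's argument from \cite{SCH85}: the assumption that $(P_i = c_i)$ has no $\bk$-solution means that for \emph{every} $\lambda\ne 0$ the only $\bk$-points on the singular locus lie on the locus where the extra coordinates vanish, so intersecting over (finitely many, or a Zariski-dense family of) choices of $\lambda$ pins the $\bk$-points down to a subvariety of codimension at least something like $(d-1)$ in each of the $n$ ``directions''; summing/combining across the $n$ forms gives a bound of order $n^2$ on the codimension after accounting for the $(d-1)$ factor. \textbf{(3)} Apply Theorem \ref{rank-sing} to the homogenized single form to conclude $\prk_\bk(\mathcal P)\le 2^{d-1}(d-1)n^2$. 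One should also use here the specific arithmetic of $\bk$ — that $\bk$ is a number field or finite separable extension of $\mF_q(t)$ — precisely in the place where Schmidt's counting/descent argument needs it (e.g. a product formula, or that such fields are ``large enough'' for the Lang–Weil / Schmidt-type point-counting estimate to force a $\bk$-point when the variety is of the right dimension).

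The main obstacle I expect is step \textbf{(2)}: making the passage from ``no affine $\bk$-solution'' to ``the singular locus has small codimension'' quantitatively correct, with the right constant $(d-1)n^2$. Schmidt's original argument handles a single form over $\mQ$; extending it to $n$ forms requires iterating the descent $n$ times (once per form, or once per coordinate of $\lambda$), and one must verify that each iteration loses only a factor linear in $n$, not exponential, and that the field-arithmetic input (the point-counting lemma that fails to produce a $\bk$-point) remains valid over all the admissible non-finite fields uniformly. A secondary technical nuisance is bookkeeping the homogenization cleanly so that the $2^{d-1}$ factor from Theorem \ref{rank-sing} is not inflated — in particular one wants to apply \ref{rank-sing} to a form of degree exactly $d$, so the extra linear factors must be absorbed into the existing $d$ tensor slots rather than creating new ones.
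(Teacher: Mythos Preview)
Your proposal misses the key opening move and then heads in a direction that does not obviously close. The paper's first step is the almost trivial observation that unsolvability of $(P_i=c_i)$ forces $Z_{\mathcal P}(\bk)=\bk^{s}$, where $s=s_1+\cdots+s_{d-1}$: if some $\bx$ had $P_1(\bx,\cdot),\ldots,P_n(\bx,\cdot)$ linearly independent, one could solve the linear system $P_i(\bx,\cdot)=c_i$ in $V_d$. So the Birch singular locus of the \emph{collection} is everything --- no homogenization, no affine-to-linear trick, is needed to see this.

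From there the paper does not iterate Schmidt's descent $n$ times, nor does it homogenize; it runs a single pigeonhole. For each integral $\bx\in B_R(\bA)^s$ the $s_d\times n$ matrix $(P_j(\bx,e_i))$ has rank $<n$ and entries of height $\ll R^{d-1}$, so by Cramer's rule it has a nonzero kernel vector $a\in\bA^n$ of height $\ll R^{(d-1)(n-1)}$. There are $\ll R^{mn(d-1)(n-1)}$ such $a$'s, so some fixed $a$ works for $\gg R^{m[s-(d-1)n^2]}$ points, i.e.\ a single form $P=\sum a_iP_i$ has $|Z_P\cap B_R^s|\gg |B_R|^{s-(d-1)n^2}$. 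Then an effective Noether normalization (point count $\Rightarrow$ dimension) combined with Theorem~\ref{rank-sing} gives $\prk_\bk(P)\le 2^{d-1}(d-1)n^2$. The arithmetic of $\bk$ enters only through the existence of a ring of integers $\bA$ that is free of finite rank over $\mZ$ or $\mF_q[t]$, giving a pseudo-norm with linear growth; there is no Lang--Weil input.

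Your homogenization route has a real obstruction you have not addressed: with $\lambda$ as a parameter, the singular locus of $\sum_i\lambda_i(P_i-c_i\,y_1\cdots y_{d-1}\ell)$ in the affine chart only encodes the single equation $\sum_i\lambda_i(P_i-c_i)=0$, not all $n$ simultaneously, so your claim in step~(1) that a $\bk$-point there ``would produce a $\bk$-solution to $P_i=c_i$ for all $i$'' is false. If instead you make $\lambda$ an additional tensor slot, the form has degree $d+1$ and you lose the $2^{d-1}$ constant you were trying to preserve. Either way the scheme does not yield the stated bound without essentially reinventing the pigeonhole-on-small-kernel-vectors argument above.
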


To prove proposition \ref{rank-sing-families} we need an effective version of Noether normalization.

\begin{lemma}[Noether normalization]\label{normal}
	Let $ \bk $ be an algebraically closed field and let $ V\subset \bk^s $ be an algebraic set of codimension $ t $ defined by $ m $ homogenenous equations of degree $ \le n. $ Look at matrices $ M\in\mathcal{M}_{(s-t)\times s}(\bk). $ There exists a constant $ C(s,m,n) $ (independent of $ V$!) such that for any set $ U\subset \bk $ of size $|U|\ge C$ we can find a matrix $ M $ with entries in $ U $ for which the map \linebreak 
	$ M:V\to\bk^{s-t} $ has finite fibers of size bounded by $C.$
\end{lemma}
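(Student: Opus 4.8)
The plan is to prove this by a standard dimension-counting / genericity argument, but being careful to extract a bound $C(s,m,n)$ that depends only on $s,m,n$ and not on the particular variety $V$. First I would reduce to the affine cone picture: $V\subset \bk^s$ is cut out by $m$ forms $f_1,\ldots,f_m$ of degree $\le n$, and $V$ has codimension $t$, i.e.\ $\dim V = s-t$. I want a linear surjection $M:\bk^s\to\bk^{s-t}$ whose restriction to $V$ is finite. The classical criterion is: $M|_V$ is finite if and only if $V\cap \ker M = \{0\}$ (as the projectivization $\mathbb{P}(V)\cap\mathbb{P}(\ker M)=\emptyset$ in $\mathbb{P}^{s-1}$, since then the projective closure of $V$ meets the center of projection emptily, forcing quasi-finiteness and hence, by properness of the projective $V$, finiteness). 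Here $\ker M$ has dimension $t$, and $\dim\mathbb{P}(V)=s-t-1$, $\dim\mathbb{P}(\ker M)=t-1$, which sum to $s-2 < s-1$, so a \emph{generic} linear subspace of dimension $t$ misses $V$. The content of the lemma is to make "generic" quantitative: the bad locus of matrices $M$ (those for which $V\cap\ker M\neq 0$, or for which the fibers are large) is contained in a proper subvariety of $\mathcal{M}_{(s-t)\times s}$ of degree bounded in terms of $s,m,n$ only, so that any grid $U^{(s-t)\times s}$ with $|U|$ exceeding that degree bound contains a good point.

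The key steps, in order: (1) Set up the incidence variety $W = \{(M,v)\in \mathcal{M}_{(s-t)\times s}\times \mathbb{P}^{s-1}: v\in V, Mv=0\}$ and bound the degree of its image under the first projection using an effective elimination / resultant bound (e.g.\ Bézout: each $f_i$ has degree $\le n$ and the equations $Mv=0$ are bilinear, so the degree of the closure of the projection to $\mathcal{M}$-space is bounded by a function of $s,m,n$). Since generically the fiber over $M$ is empty, this image is a proper subvariety; call its degree $C_1(s,m,n)$. (2) Separately handle the fiber-size bound: on the open set where $M|_V$ is finite, the fiber cardinality is bounded by $\deg \overline{V}$ in the Segre/Veronese embedding, which by Bézout is at most $n^{?}$ — again a function of $s,m,n$ only; alternatively bound it via the degree of the generic fiber of the projection, controlled by the same resultant. (3) Take $C(s,m,n)$ to be the max of $C_1$ and this fiber bound (plus $1$). (4) Conclude: if $|U|\ge C$, then since the bad locus is a proper hypersurface-type subvariety of degree $<|U|$, by the Schwartz–Zippel / combinatorial Nullstellensatz bound the grid $U^{(s-t)\times s}$ is not contained in it, so there is an $M$ with entries in $U$ avoiding the bad locus, and this $M$ has the required finiteness with fiber size $\le C$.

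The main obstacle I anticipate is step (1)–(2): making the degree bound on the bad locus genuinely \emph{uniform in $V$}. Naively the degree of the projection image depends on $\deg V$, but $\deg V$ is itself controlled by $s,m,n$ via Bézout (an intersection of $m$ hypersurfaces of degree $\le n$ in $\bk^s$ has degree $\le n^{\min(m,s)}$ on each component, and codimension $t$ means at most $s$ of the defining equations are "used"), so this circularity resolves. The slightly delicate point is that $V$ need not be a complete intersection and may be reducible with components of various dimensions, so I would either pass to the union of top-dimensional components or invoke a uniform bound on the degree of the closure of $Z(\bk)$ — but since we only need \emph{an} $M$ with finite fibers of \emph{some} size $\le C(s,m,n)$, it suffices to bound the degree of $\overline{V}$ by Bézout-type estimates and then quote the standard effective Noether normalization (or effective Nullstellensatz) literature for the existence of the grid point; this is routine once the uniform degree bound is in hand. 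I would also remark that over an algebraically closed field the genericity statement "a nonzero polynomial of degree $<|U|$ does not vanish on all of $U^N$" is exactly the combinatorial Nullstellensatz, which gives the clean threshold $|U|\ge C$.
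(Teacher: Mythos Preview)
Your proposal is correct and follows essentially the same route as the paper. The paper's own proof is a two-line sketch: it simply says to follow the standard linear Noether normalization argument (as in Eisenbud), observing that at each step the entries of $M$ must avoid the zero set of certain polynomial equations of degree $\le n$; your incidence-variety/B\'ezout/Schwartz--Zippel outline is a legitimate way to make that sketch precise, and your uniformity worry (that $\deg V$ is bounded in terms of $s,m,n$ by B\'ezout, hence the fiber bound and bad-locus degree are too) is exactly the point the paper is taking for granted.
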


\begin{proof}[Proof of lemma]
	Just follow the proof of linear Noether normalization, for example in \cite{Eis13}. Note that we need to choose the entries of $ M $ to avoid solutions of certain polynomial equations of degree $ \le n. $
\end{proof}

This lemma gives us a useful corollary when we have a pseudo-norm at our disposal. It will be convenient to use a slightly non-standard definition.

\begin{definition}
	Let $ \bA $ be a ring. We call a function $ \phi:\bA\to \mZ^{\ge 0} $ a \emph{pseudo-norm} if it satisfies the following conditions: 
	\begin{enumerate}
		\item $ \phi(x+y) \le \phi(x)+\phi(y). $
		\item $ \phi(xy) \ll \phi(x)\phi(y) $ where the implied constant is independent of $ x,y. $
	\end{enumerate}
\end{definition} 

Given $(\bA,\phi)$ as above and $R>0$ we write $B_R(\bA) := \{a\in \bA: \phi(a) \le R\}$ for the ball of radius $R.$ We say $(\bA,\phi)$ has \emph{linear growth} if for all $0<R<\infty$ the set $B_R(\bA)$ is finite and satisfies
$|B_{CR}(\bA)| \ll_{C} |B_R(\bA)|$ for any constant $C.$

\begin{lemma}[Point counting and dimension]\label{normal-analytic}
	Let $ \bA $ be an infinite integral domain equipped with a pseudo-norm $\phi$ such that $(\bA,\phi)$ has linear growth. Let $ \bk $ be the fraction field of $\bA$ and $ V\subset \bar{\bk}^s $ an algebraic set defined over $ \bA $ by $ m $ homogeneous equations of degree $ n. $ For $ R>0 $ set
	\[ 
	N_R = |V\cap (B_R(\bA))^s|.
	 \]
	 Given $ D>0, $ there exists $ R_0(s,m,n,D) $ (but independent of $ V $!) such that if for some $ R>R_0 $ we have $ N_R \ge D|B_R(\bA)|^{s-t}  $ then $ \textnormal{codim}(V) \le t. $ 
\end{lemma}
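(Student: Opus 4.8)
The plan is to deduce this from the effective Noether normalization of Lemma~\ref{normal} together with the linear growth hypothesis on $(\bA,\phi)$. Suppose for contradiction that $\textnormal{codim}(V) = t' > t$, so $V$ (or rather its Zariski closure) has dimension $s - t' < s - t$. Apply Lemma~\ref{normal} with the ambient dimension $s$, the number of equations $m$, and the degree bound $n$: there is a constant $C = C(s,m,n)$ such that, choosing a finite subset $U \subset \bA$ of size at least $C$ consisting of small-norm elements (possible since $\bA$ is infinite, or rather since we may take $U \subseteq B_{R_1}(\bA)$ for a suitable fixed $R_1 = R_1(C)$), we obtain a matrix $M \in \mathcal{M}_{(s-t')\times s}(\bA)$ with entries in $U$ such that $M : V \to \bar{\bk}^{s-t'}$ has finite fibers of size at most $C$.

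Next I would estimate the size of the image of $V \cap (B_R(\bA))^s$ under $M$. Since $M$ has entries of norm $\le R_1$, and using the pseudo-norm axioms $\phi(x+y) \le \phi(x)+\phi(y)$ and $\phi(xy) \ll \phi(x)\phi(y)$, each coordinate of $M(v)$ for $v \in (B_R(\bA))^s$ has norm $\ll_{s,R_1} R$; that is, $M$ maps $V \cap (B_R(\bA))^s$ into $(B_{C'R}(\bA))^{s-t'}$ for a constant $C' = C'(s,m,n)$. Because the fibers have size $\le C$, we get
\[
N_R = |V \cap (B_R(\bA))^s| \le C \cdot |B_{C'R}(\bA)|^{s-t'}.
\]
Now invoke linear growth: $|B_{C'R}(\bA)| \ll_{C'} |B_R(\bA)|$, so $N_R \ll_{s,m,n} |B_R(\bA)|^{s-t'} \le |B_R(\bA)|^{s-t-1}$, where the last inequality uses $t' \ge t+1$. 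Comparing with the hypothesis $N_R \ge D\,|B_R(\bA)|^{s-t}$ gives
\[
D\,|B_R(\bA)|^{s-t} \le N_R \ll_{s,m,n} |B_R(\bA)|^{s-t-1},
\]
hence $|B_R(\bA)| \ll_{s,m,n} D^{-1}$, i.e. $|B_R(\bA)|$ is bounded by a constant depending only on $s,m,n,D$. Since $(\bA,\phi)$ has linear growth and $\bA$ is infinite, $|B_R(\bA)| \to \infty$ as $R \to \infty$, so this fails once $R$ exceeds some $R_0 = R_0(s,m,n,D)$. This contradiction shows $\textnormal{codim}(V) \le t$, as desired.

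The main obstacle I anticipate is bookkeeping the constants carefully so that $R_0$ genuinely depends only on $s,m,n,D$ and not on $V$: one must check that the set $U$ (and hence the norm bound $R_1$ on the entries of $M$) can be chosen uniformly in $V$—this is exactly the content of the "independent of $V$" clause in Lemma~\ref{normal}—and that the constant $C$ bounding the fiber sizes is likewise uniform. A secondary point to handle with a little care: Lemma~\ref{normal} is stated for homogeneous ideals and an algebraic \emph{set}, so one should work with the projective picture or pass to the affine cone and note that codimension is preserved; since $V$ is cut out by homogeneous equations this is harmless. The linear growth hypothesis is used precisely to absorb the dilation by $C'$ coming from the entries of $M$, and nothing more.
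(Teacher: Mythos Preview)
Your argument is correct and essentially identical to the paper's: both assume for contradiction that $\textnormal{codim}(V)=t'>t$, apply Lemma~\ref{normal} to obtain a projection $M$ with small entries and bounded fibers, use the pseudo-norm axioms to land $M(V\cap B_R(\bA)^s)$ in $B_{C'R}(\bA)^{s-t'}$, and then invoke linear growth to conclude $N_R\ll |B_R(\bA)|^{s-t-1}$, contradicting the hypothesis for $R$ large. The paper simply takes $U=B_C(\bA)$ directly rather than introducing a separate $R_1$, and your caveats about uniformity in $V$ and the homogeneous/affine-cone issue are already handled (the former by the explicit clause in Lemma~\ref{normal}, the latter being moot since everything is done in $\mA^s$).
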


\begin{proof}[Proof of lemma \ref{normal-analytic}]
	 Suppose, to get a contradiction, that $ \textnormal{codim}(V) =t' > t. $ By infinitude of $\bA,$ we can apply lemma \ref{normal} to get a constant $C=C(s,m,n)$ and a matrix $M\in \mathcal{M}_{(s-t')\times s}(\bA)$ with entries in $B_C(\bA)$ such that $M:V\to \bar{\bk}^{s-t'}$ has fibers of size $ \le C. $ This allows us to bound $N_R$
	 \[
	 N_R \le C\cdot |M((B_R(\bA))^s)| \le C\cdot |B_{C'R}(\bA)|^{s-t'}\ll_{s,m,n} |B_R(\bA)|^{s-t-1},
	 \]
	 where $C'= C'(s,m,n). $ The second and third inequalities use the fact that $\phi$ is a pseudo-norm and that $(\bA,\phi)$ has linear growth, respectively. If \linebreak
	 $N_R\ge D|B_R(\bA)|^{s-t}$ then we combine with the above inequality to get
	 \[
	 D\le D'|B_R(\bA)|^{-1}
	 \] 
	 where $D' = D'(s,m,n)>0.$ By infinitude of $\bA,$ this is impossible for\linebreak
	 $R\ge R_0(s,m,n,D)$, so we must have $\textnormal{codim}(V) \le t. $
\end{proof}

\begin{corollary}\label{points-to-rank}
    Let $ \bA $ be an infinite integral domain equipped with a pseudo-norm $\phi$ such that $(\bA,\phi)$ has linear growth. Let $\bk$ be the fraction field of $\bA,$ $P:\bk^{s_1}\times\ldots\times \bk^{s_d}\to\bk$ a multilinear form and $D$ a positive constant. Write $s =  s_1+\ldots+s_{d-1}.$ Then there exists $R_0(s,d,D)$ independent of $P$ such that if we have 
    \[
    |Z_P(\bk)\cap (B_R(\bA))^s| \ge D|B_R(A)|^{s-t} 
    \]
    for some $R\ge R_0$ then $\prk_\bk (P) \le 2^{d-1} \cdot t.$
\end{corollary}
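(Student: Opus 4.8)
The plan is to combine the point-counting dictionary of Lemma \ref{normal-analytic} with the rank-versus-singular-locus estimate of Theorem \ref{rank-sing}. First I would observe that the hypothesis
\[
|Z_P(\bk)\cap (B_R(\bA))^s| \ge D|B_R(\bA)|^{s-t}
\]
says exactly that the affine variety $Z_P\subset \bar{\bk}^s$ has ``many'' $\bA$-points in the ball of radius $R$. The only subtlety is that $Z_P$ is cut out by the $s_d$ bilinear-in-$\bx_{\le d-1}$ equations $P(\bx_1,\ldots,\bx_{d-1},e_i)=0$, which are homogeneous of degree $d-1$ in the $s=s_1+\ldots+s_{d-1}$ variables, and we need these to have integer (i.e. $\bA$-rational) coefficients so that Lemma \ref{normal-analytic} applies. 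Since scaling $P$ by an element of $\bA$ changes neither $Z_P$ nor $\prk_\bk(P)$, we may assume without loss of generality that $P$ has coefficients in $\bA$; then $Z_P$ is defined over $\bA$ by $m = s_d$ homogeneous equations of degree $n = d-1$.

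Next I would apply Lemma \ref{normal-analytic} directly to $V = Z_P$ with this $m$ and $n$. The lemma produces a threshold $R_0 = R_0(s,m,n,D)$; since $m = s_d \le s$ and $n=d-1$ depend only on $s$ and $d$, this is a function $R_0(s,d,D)$ independent of $P$, as required. For any $R\ge R_0$ at which the point-count lower bound holds, Lemma \ref{normal-analytic} yields $\textnormal{codim}_{V_1\times\ldots\times V_{d-1}}(Z_P)\le t$, where the codimension is that of the Zariski closure of the $\bk$-points (indeed of $Z_P$ itself, since $Z_P$ is defined over $\bk$ and hence its $\bk$-points are Zariski-dense in each irreducible component defined over $\bk$; more carefully, the point-count hypothesis is about $Z_P(\bk)$, and Lemma \ref{normal-analytic} is stated for the $\bA$-points of the ambient variety, so what we actually bound is the codimension of the Zariski closure of $Z_P(\bk)$, which is precisely $g_\bk(P)$ in the notation of Theorem \ref{rank-sing}). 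Thus $g_\bk(P)\le t$.

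Finally, Theorem \ref{rank-sing} gives $\prk_\bk(P)\le 2^{d-1}g_\bk(P)\le 2^{d-1}t$, which is the claim. The one point that needs a little care — and the only place I expect any friction — is matching the two notions of ``the variety $Z_P$'': Lemma \ref{normal-analytic} counts points in $(B_R(\bA))^s$ lying on $V$, whereas the hypothesis of the corollary counts points of $Z_P(\bk)$ in the same ball, and Theorem \ref{rank-sing} is phrased in terms of the Zariski closure of $Z_P(\bk)$. These are reconciled by noting that $\bA$-points on $Z_P$ are exactly the $\bk$-points of $Z_P$ with $\phi$-bounded coordinates (since $\bA\subset\bk$ and $Z_P$ is defined over $\bA$), so the lower bound feeds into Lemma \ref{normal-analytic} verbatim, and the codimension output is the codimension of the Zariski closure of $Z_P(\bk)$ because Lemma \ref{normal-analytic}'s contradiction argument applies Noether normalization to (the closure of) the set of $\bA$-points being bounded. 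Everything else is bookkeeping on the constants.
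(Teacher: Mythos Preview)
Your proposal is correct and is exactly the paper's approach: the paper's entire proof reads ``Combine lemma \ref{normal-analytic} and theorem \ref{rank-sing},'' and you have simply unpacked that combination, including the (correct) observation that what Lemma \ref{normal-analytic} really bounds is the codimension of the closure of the $\bA$-points, which is what feeds into $g_\bk(P)$ in Theorem \ref{rank-sing}. One small slip: your claim ``$m = s_d \le s$'' is not true in general (recall $s = s_1+\ldots+s_{d-1}$, which need not dominate $s_d$), so the dependence of $R_0$ on the data is really on $(s_1,\ldots,s_d,d,D)$ rather than literally $(s,d,D)$; the paper's statement is equally casual on this point, and it does not affect the argument.
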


\begin{proof}
    Combine lemma \ref{normal-analytic} and  theorem \ref{rank-sing}.
\end{proof}

Corollary \ref{points-to-rank} is the result that we will actually use in our proof of proposition \ref{rank-sing-families}, the other lemmas were preliminary.

\begin{proof}[Proof of proposition \ref{rank-sing-families}]
	Let $s = s_1+\ldots+s_{d-1}.$ First we claim that our hypothesis implies $Z_{\mathcal{P}}(\bk^s) = \bk^s.$ Indeed, suppose this isn't the case. Then there is some $\bx\in \bk^s\setminus Z_{\mathcal{P}}(\bk^s),$ meaning $P_i(\bx,\cdot)$ are linearly independent and $P_i(\bx,\cdot) = c_i$ has a solution in $\bk^{s_d}$. The proof of the proposition will be similar for both types of fields. \linebreak
	\textbf{Number fields:} Let $ \bA $ be the ring of integers of $ \bk. $ Recalling that $ \bA \cong \mZ^m$ as $ \mZ $-modules, we define a pseudo-norm on $ \bA $ with linear growth by
	\[
	\phi(n_1,\ldots,n_m) = \max_i |n_i|.
	\]
	Assume without loss of generality that the collection $ \mathcal{P} $ has coefficients in $ \bA. $ Now, suppose $ \bx\in Z_\mathcal{P}(\bk)\cap B_R(\bA)^s. $ Then the matrix $ M = (m_{i,j}),$ where \linebreak $m_{i,j} = P_j(\bx_1,\ldots,\bx_{d-1},e_i), $ is of rank $ \le n-1 $ and has $ \phi(m_{i,j}) \ll R^{d-1} $ for all $i,j.$ By a standard linear algebra argument, we obtain a small vector in the kernel of $M.$
	\begin{claim}\label{small-sol}
		Suppose $ (\bA,\phi) $ is an integral domain equipped with a pseudo-norm. Suppose $ M\in M_{m\times n}(\bA) $ has $ \rk(M) \le n-1 $ and $ \phi(m_{i,j}) \le T $ for all $i,j.$ Then there exists $ 0\neq a\in \bA^n $ such that $ Ma = 0 $ and $ \phi(a_i) \ll_n T^{n-1} $ for all $i.$ 
	\end{claim}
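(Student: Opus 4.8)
The plan is to write down a nonzero kernel vector of $M$ \emph{explicitly} by Cramer's rule, so that its coordinates are $r\times r$ minors of $M$, and then read off the size bound directly from the pseudo-norm axioms. There is no conceptual difficulty here — only some bookkeeping with the implied constants — so the "obstacle" is purely cosmetic.

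We may assume $T\ge 1$, which is the only case used later. If $M=0$ the vector $a=e_1$ works, since then $Ma=0$ and $\phi(a_i)$ is one of the two fixed constants $\phi(1),\phi(0)$, each $\ll_n T^{n-1}$. So assume $r:=\rk(M)$ satisfies $1\le r\le n-1$, the rank being computed over the fraction field $\bk$ of $\bA$ and equal to the largest size of a nonvanishing minor. After permuting rows and columns we may assume the top-left $r\times r$ minor $\Delta:=\det M_0$ is nonzero. Let $M'$ be the $r\times n$ submatrix consisting of the first $r$ rows of $M$; every other row of $M$ lies in the $\bk$-span of these $r$, so $\ker M=\ker M'$, and it suffices to find a small nonzero $a\in\bA^n$ with $M'a=0$.

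Now the first $r$ columns of $M'$ are exactly the nonsingular block $M_0$, hence $\bk$-linearly independent and a basis of the ($r$-dimensional) column space of $M'$. Therefore column $n$ of $M'$ is a $\bk$-linear combination of them, and by Cramer's rule the $j$-th coefficient is $\det(M_0^{(j)})/\Delta$, where $M_0^{(j)}$ is $M_0$ with its $j$-th column replaced by the vector formed from the first $r$ entries of column $n$ of $M$. Clearing the denominator $\Delta$, the vector $a\in\bA^n$ defined by $a_j=\det(M_0^{(j)})$ for $1\le j\le r$, $a_j=0$ for $r<j<n$, and $a_n=-\Delta$, satisfies $M'a=0$, hence $Ma=0$, and is nonzero because $a_n=-\Delta\neq 0$.

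Finally I would bound the coordinates. Every $a_i$ is either $0$, $\pm\Delta$, or some $\det(M_0^{(j)})$, i.e. (up to a sign, which costs only the fixed factor $\phi(-1)$) an $r\times r$ determinant whose entries are among the $m_{i,j}$; thus it is a sum of $r!$ terms, each a product of $r$ elements of $\phi$-value $\le T$. Iterating the pseudo-norm axioms $\phi(xy)\ll\phi(x)\phi(y)$ and $\phi(x+y)\le\phi(x)+\phi(y)$ gives $\phi(a_i)\ll_r T^r$, and since $r\le n-1$ and $T\ge 1$ this is $\ll_n T^{n-1}$, as claimed. The only point requiring attention is to apply the multiplicative constant from the definition of pseudo-norm exactly the right number of times ($r-1$ times per product, then $r!$ summands), and the only hypothesis used is that $\bA$ is an integral domain, which is precisely what makes Cramer's rule available over $\bk$.
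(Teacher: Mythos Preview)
Your argument is correct and is essentially the same as the paper's: both isolate a maximal nonsingular submatrix and read off a kernel vector via Cramer's rule, so that the coordinates are minors of size $\le n-1$ in the entries of $M$ and the bound $\phi(a_i)\ll_n T^{n-1}$ follows from the pseudo-norm axioms. The only cosmetic difference is that the paper completes the $j$ independent rows to an invertible $n\times n$ matrix by appending standard basis rows and then solves $M'a=(0,\ldots,0,\det M')$, whereas you work directly with the $r\times r$ block and express one dependent column in terms of the first $r$; your route is if anything slightly cleaner, and your explicit remark that $T\ge1$ is the only case needed is a fair handling of the passage from $T^r$ to $T^{n-1}$.
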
  
	\begin{proof}
		Suppose without loss of generality that the first $ j\le n-1 $ rows of $ M $ are a $\bk$-basis for $ \textnormal{row}(M). $ We obtain a new matrix by deleting the other rows and then adding $ n-j $ rows of standard basis vectors to obtain an invertible $ n\times n $ matrix which we call $ M'. $ Let $D = \det M'.$ Then Cramer's rule gives a solution $ 0\neq a\in \bA^n $ to the equation $ M'a = (0,\ldots,0,D) $ which satisfies $ \phi(a_i) \ll_n T^{n-1} $ for all $i.$ 
	\end{proof}
	Therefore, there is a nontrivial solution $ a \in \bA^n  $ to the equation $ Ma = 0 $ which satisfies $ \phi(a_i) \ll R^{(d-1)(n-1)} $ for all $i.$ The number of possibilities for such an $ a $ is $ \ll R^{mn(d-1)(n-1)}. $ Hence there must be some nontrivial linear combination $ P = a_1P_1+\ldots+a_nP_n $ with
	\[ 
	Z_P (\bk) \cap (B_R(\bA))^s \gg R^{m[s-(d-1)n^2]}.
	\]
	If $R$ is sufficiently large, we can apply corollary \ref{points-to-rank} to obtain 
	$$ \prk_{\bk}(P) \le  2^{d-1} (d-1)n^2.$$
	
	\textbf{Finite separable extensions of}  $ \mF_q(t) $:  Let $ \bA $ be the integral closure of $ \mF_q[t] $ in $ \bk. $ Then $ \bA \cong \mF_q[t]^m $ as a $ \mF_q[t] $-module, with the same proof as for rings of integers in number fields. Define a pseudo-norm on $ \mF_q[t] $ with linear growth by 
	$ \phi(f) = q^{\deg(f)} $ and extend it to $ \bA $ by $ \phi(f_1,\ldots,f_m) = \max_i \phi(f_i). $ Assume without loss of generality that $ \mathcal{P} $ has coefficients in $ \bA. $ From here the proof proceeds in a similar manner: If $  x\in Z_\mathcal{P}(\bk)\cap B_{q^R}(\bA)^s $ then the matrix $ M = (m_{i,j}),$ where $m_{i,j} = P_j(\bx_1,\ldots,\bx_{d-1},e_i), $ is of rank $ \le n-1 $ and has $ \phi(m_{i,j}) \ll q^{R(d-1)} $ for all $i,j.$ By claim \ref{small-sol} there is a nontrivial solution $ a \in \bA^n  $ to the equation $ Ma = 0 $ which satisfies $ \phi(a_i) \ll q^{R(d-1)(n-1)} $ for all $i.$ The number of possibilities for such an $ a $ is $ \ll q^{mnR(d-1)(n-1)}. $ Hence there must be some nontrivial  linear combination $ P = a_1P_1+\ldots+a_nP_n $ with
	\[ 
		Z_P (\bk) \cap (B_{q^R}(\bA))^s \gg q^{mR[s-(d-1)n^2]}.
	\]
	If $R$ is sufficiently large, we can apply corollary \ref{points-to-rank} to obtain 
	$$ \prk_{\bk}(P) \le  2^{d-1}(d-1)n^2. $$ 
\end{proof}

Now we can finish the proof of theorem \ref{universal}, and hence of theorems \ref{main-ml} and \ref{main-multiple}.

\begin{proof}
We will show that theorem \ref{universal} holds for number fields and finite separable extensions of $\mF_q(t)$ with  constants $C = 2^{d-1}(d-1), D=2.$ Suppose $P_1,\ldots,P_n$ satisfy $\prk_\bk (P_1,\ldots,P_n) > 2^{d-1}(d-1)(nt^d)^2.$ Recall that we proved that this implies $\prk_\bk(P^l_{j_1,\ldots,j_d}) > 2^{d-1}(d-1)(nt^d)^2.$ By proposition $\ref{rank-sing-families}$ the system of equations 
\[
P^l_{j_1,\ldots,j_d}(A_1,\ldots, A_d) = r^l_{j_1,\ldots,j_d}
\]
from the end of section $ 3 $ has a $\bk$-solution as desired. Recall that for the constants $ \tilde{A} , \tilde{B} $ of theorem \ref{main-ml} we can take
\[
    \tilde{A} =  C = 2^{d-1}(d-1),\ 
    \tilde{B} = dD = 2d,
\]

and for the constants $A,B$ of theorem \ref{main-multiple} we can take
\[
A = \tilde{A} \binom{d}{\lfloor d/2 \rfloor}^{\tilde{B}} = 2^{d-1}(d-1) \binom{d}{\lfloor d/2 \rfloor}^{2d}, \  
    B = \Tilde{B} = 2d.
\]
\end{proof}
	
\bibliography{papers}
\bibliographystyle{plain}

\end{document}